\newtheorem{theorem}{Theorem}[section]
\newtheorem{remark}{Remark}[section]
\newtheorem{definition}{Definition}
\newcommand\dd{\mathrm{d}}
\newcommand\pp{\partial}
\newcommand\x{\bm{x}}
\newcommand\z{\bm{z}}
\newcommand\uvec{\mathbf{u}}
\newcommand\X{\mathbf{X}}
\newcommand\n{{\bf n}}
\def\widebreve{\mathpalette\wide@breve}
\def\wide@breve#1#2{\sbox\z@{$#1#2$}%
     \mathop{\vbox{\m@th\ialign{##\crcr
\kern0.08em\brevefill#1{0.8\wd\z@}\crcr\noalign{\nointerlineskip}%
                    $\hss#1#2\hss$\crcr}}}\limits}
\def\brevefill#1#2{$\m@th\sbox\tw@{$#1($}%
  \hss\resizebox{#2}{\wd\tw@}{\rotatebox[origin=c]{90}{\upshape(}}\hss$}
	\newcommand\be {\begin{equation}}
	\newcommand\ee {\end{equation}}
	\newcommand\dt {{\Delta t}}
\renewcommand{\dot}[1] {\overset{\,_{\mbox{\huge .}}}{#1}}
\numberwithin{equation}{section}
\begin{document}

\title{A structure-preserving, operator splitting scheme for reaction-diffusion equations with detailed balance}

\author[iit]{Chun Liu}
\ead{cliu124@iit.edu}

\author[umnd]{Cheng Wang}
\ead{cwang1@umassd.edu}

\author[iit]{Yiwei Wang\corref{cor1}}
\cortext[cor1]{Corresponding author}
\ead{ywang487@iit.edu}

\address[iit]{Department of Applied Mathematics, Illinois Institute of Technology, Chicago, IL 60616, USA}
\address[umnd]{Department of Mathematics, University of Massachusetts, North Dartmouth, MA 02747}

\begin{abstract}
In this paper, we propose and analyze a positivity-preserving, energy stable numerical scheme for certain type reaction-diffusion systems involving the Law of Mass Action with the detailed balance condition. The numerical scheme is constructed based on a recently developed energetic variational formulation, in which the reaction part is reformulated in terms of reaction trajectories. 
The fact that both the reaction and the diffusion parts dissipate the same free energy opens a path of an energy stable, operator splitting scheme for these systems. At the reaction stage, we solve equations of reaction trajectories by treating all the logarithmic terms in the reformulated form implicitly due to their convex nature. The positivity-preserving property and unique solvability can be theoretically proved, based on the singular behavior of the logarithmic function around the limiting value. Moreover, the energy stability of this scheme at the reaction stage can be proved by a careful convexity analysis. Similar techniques are used to establish the positivity-preserving property and energy stability for the standard semi-implicit solver at the diffusion stage. As a result, a combination of these two stages leads to a positivity-preserving and energy stable numerical scheme for the original reaction-diffusion system. To our best knowledge, it is the first time to report an energy-dissipation-law-based operator splitting scheme to a nonlinear PDE with variational structures. Several numerical examples are presented to demonstrate the robustness of the proposed operator splitting scheme.
\end{abstract}


\maketitle


\noindent
{\bf Key words and phrases}:
reaction-diffusion system, energetic variational approach (EnVarA), logarithmic energy potential, operator splitting, positivity preserving, energy stability 

\noindent
{\bf AMS subject classification}: \, 35K35, 35K55, 49J40, 65M06, 65M12

\section{Introduction}

Reaction-diffusion type equations play an important role in modeling many physical and biological processes, such as pattern formation \cite{hao2020spatial, kondo2010reaction, pearson1993complex}, tumor growth \cite{hawkins2012numerical, liu2018accurate, perthame2014hele} and molecular motors \cite{chipot2003variational, julicher1997modeling}. Mathematically, a reaction-diffusion system often takes a form of 
\begin{equation} \label{equation: reaction-diffusion} 
\pp_t \bm{c} = \nabla \cdot (D(\bm{c}, \x) \nabla \bm{c}) + \bm{r}(\bm{c}),
\end{equation}
subject to a suitable boundary condition, where $\bm{c}(\x, t)$ represents the number density of each species in this system, $\bm{r}(\bm{c})$ accounts for all local reactions, and $D(\bm{c}, \x)$ is the  diffusion coefficient that depends on $\bm{c}$ and location $\bm{\x}$. There are substantial interests in studying reaction-diffusion systems both analytically and numerically, see \cite{bertolazzi1996positive, desvillettes2006, desvillettes2017trend, formaggia2011positivity, glitzky2013gradient, haskovec2018decay, huang2019positivity, lo2012robust, mielke2011gradient, mielke2013thermomechanical, zhao2011operator} for instance. 

The numerical computation of reaction-diffusion systems is often very challenging, due to the nonlinearity and stiffness brought by the reaction term \cite{nie2006efficient}. In addition, some standard numerical schemes may not be able to preserve the positivity and conservation properties of species densities, which is a natural requirement for any numerical approximation \cite{formaggia2011positivity}.
Although many numerical schemes have been developed to solve reaction kinetics and reaction-diffusion systems \cite{bertolazzi1996positive, formaggia2011positivity, huang2019positivity, zhao2011operator}, a theoretical justification of these properties has been very limited.
Recently, there have been growing interests in developing structure-preserving numerical schemes for gradient-flow type PDEs with energy-dissipation laws.
Examples include the convex splitting idea~\cite{wise2009energy}, invariant energy quadratization (IEQ) \cite{zhao2016decoupled}, and scalar auxiliary variable (SAV) scheme~\cite{shen2019new}. Some theoretical analysis of positivity-preserving property and energy stability have been reported for these numerical schemes for certain systems with singular energy potential, such as Poisson-Nernst-Planck (PNP) system  \cite{shen2020PNP}, and the Cahn-Hilliard equation with Flory-Huggins energy~\cite{chen19b, dong20b, dong19a, dong20a}. However, for a general reaction-diffusion system~\eqref{equation: reaction-diffusion}, a variational structure may not exist, so that these structure-preserving ideas may be not directly applicable. Fortunately, for certain type reaction-diffusion systems, in which the reaction part satisfies the law of mass action with \emph{detailed balance} conditions, it has been discovered that, these systems admit Lyapunov functions and possess variational structures \cite{anderson2015lyapunov, desvillettes2006, desvillettes2017trend, glitzky2013gradient, haskovec2018decay, liero2013gradient, mielke2011gradient, mielke2013thermomechanical, shear1967analog, wang2020field}, which opens a new door to develop a structure-preserving numerical scheme for the systems.

In this paper, we propose and analyze a positivity-preserving and energy-stable operator splitting scheme for reaction-diffusion systems with variational structures, based on an energetic variational formulation proposed in \cite{wang2020field}.
In this formulation,  the reaction part is reformulated in terms of reaction trajectories ${\bm R}$, and the reaction and diffusion parts impose different dissipation mechanisms for the same physical energy. As a result, it opens a path of an operator splitting numerical scheme that preserves the energy dissipation property of the whole system. In more details, since the same physical energy dissipates at both the reaction and the diffusion stages, the combined operator splitting scheme yields a dissipated energy at each time step, with a careful transformation between two splitting stages. 
 The idea of operator splitting schemes has been applied to many nonlinear PDEs \cite{Bao2002, Thalhammer2012}, including reaction-diffusion systems \cite{descombes2001convergence, zhao2011operator} for a long time. However, most existing works didn't use the variational structure, or have different physical energy in different computational stages so that a combined energy stability estimate becomes very challenging at a theoretical level. In our proposed approach, since the physical free energy is in the form of logarithmic functions of the concentration ${\bm c}$, which is a linear function of reaction trajectories ${\bm R}$, the positivity-preserving analysis of the numerical scheme at both stages can be established. Similar to the analysis in a recent article~\cite{chen19b} for the Flory-Huggins Cahn-Hilliard flow, an implicit treatment of the nonlinear singular logarithmic term is crucial to theoretically justify its positivity-preserving property. A more careful analysis reveals that, the convex and the singular natures of the implicit nonlinear parts prevents the numerical solutions approach the singular limiting values, so that the positivity-preserving property is available for all the species density variables. 
To fix the idea, we only consider a reaction-diffusion systems taking the form
 \begin{equation}
  \pp_t c_i = \nabla \cdot (D_i(c_i) \nabla c_i) + r_i ({\bm c}), \quad i = 1, \ldots N,
 \end{equation}
where $D_i(c_i)$ only depend on $c_i$, so that the diffusion part is fully decoupled. 
The presented numerical approach can be applied to a more complicate system that couples chemical reaction with cross-diffusion, or Cahn-Hilliard or PNP type diffusions \cite{giga2017variational}.
 
The rest of this paper is organized as follows. In Section 2, we introduce the energetic variational approach to reaction-diffusion systems with detailed balance, which forms a basis of the numerical scheme design. The operator splitting scheme is presented in Section 3, along with a detailed analysis on the positivity-preserving property and unconditional energy stability. Finally, some numerical results are shown in Section 4 to demonstrate the robustness of the proposed operator splitting scheme.

\section{Energetic variational approach}

Originated from seminal works of Rayleigh \cite{rayleigh1873note} and Onsager \cite{onsager1931reciprocal,onsager1931reciprocal2}, the Energetic Variational Approach (EnVarA) provides a paradigm to derive the dynamics of a complicated system from a prescribed \emph{energy-dissipation law}, through two distinct variational processes: the Least Action Principle (LAP) and the Maximum Dissipation Principle (MDP)~\cite{eisenberg2010energy, giga2017variational, liu2009introduction}. These approaches have been successfully applied to build up many mathematical models \cite{giga2017variational}, including systems with chemical reaction \cite{wang2020field}. In the meantime, the EnVarA also provides a guideline of designing structure-preserving numerical schemes for systems with variational structures \cite{liu2019lagrangian,liu2020variational}.

The starting point of the EnVarA is the first and second laws of thermodynamics, which leads to an energy-dissipation law
\begin{equation}
\frac{\dd}{\dd t} E^{\text{total}}(t) = - \triangle(t) \leq 0,
\end{equation}
for an isothermal closed system \cite{ericksen1992introduction, giga2017variational}. Here $E^{\text{total}}$ is the total energy, which is the sum of the Helmholtz free energy $\mathcal{F}$ and the kinetic energy $\mathcal{K}$; $\triangle(t)$ stands for the energy dissipation rate. 
The Least Action Principle (LAP) states that, the dynamics of the Hamiltonian part of the system is determined by a critical point of the action functional $\mathcal{A}(\x) = \int_0^T \mathcal{K} - \mathcal{F} \dd t$ with respect to $\x$ (the trajectory in Lagrangian coordinates, if applicable) \cite{arnol2013mathematical, giga2017variational}, which derives the conservative force
\begin{equation}
  \delta \mathcal{A} =  \int_{0}^T \int_{\Omega(t)} (f_{\text{inertial}} - f_{\text{conv}})\cdot \delta \x  \dd \x \dd t.
\end{equation}
On the other hand, according to Onsager \cite{onsager1931reciprocal, onsager1931reciprocal2}, the dissipative force in a dissipative system ($\triangle(t) \geq 0$) can be obtained by minimizing the Onsager dissipation functional $\mathcal{D} = \frac{1}{2} \triangle(t)$ with respect to the ``rate'' $\x_t$ in the linear response regime, i.e.,
\begin{equation}
\delta \mathcal{D}  = \int_{\Omega(t)} f_{\text{diss}} \cdot \delta \x_t~ \dd \x.
\end{equation}
This principle is know as Onsager's Maximum dissipation principle \cite{giga2017variational}.
Hence, the force balance condition (Newton's second law, in which the inertial force plays a role of $ma$) results in
\begin{equation}\label{FB}
\frac{\delta \mathcal{A}}{\delta \x} = \frac{\delta \mathcal{D}}{\delta \x_t},
\end{equation}
which defines the dynamics of the system.

In the framework of the EnVarA, a generalized diffusion for a conserved quantity $\rho$ can be derived from the
energy-dissipation law 
\begin{equation}\label{Diffusion}
\frac{\dd}{\dd t} \mathcal{F}[\rho] = - \int_{\Omega} \xi(\rho) |\uvec|^2 \dd \x,
\end{equation}
along with the kinematics
\begin{equation}\label{kinematic}
\rho_t + \nabla \cdot (\rho \uvec) = 0.
\end{equation}
Here $\mathcal{F}[\rho]$ is the free energy, $\uvec$ is the macroscopic velocity, and $\xi(\rho) > 0$ is the friction coefficient. The kinematics (\ref{kinematic}) is the consequence of the mass conservation of $\rho$. Through this paper, we assume that 
$$ 
  \mathcal{F}[\rho] = \int_{\Omega} \omega(\rho) \dd \x, 
$$
where $\omega(\rho)$ is the free energy density.

To derive the equation of $\uvec$ by using the EnVarA, one should reformulate the energy-dissipation law (\ref{Diffusion}) in Lagrangian coordinates. Let $\x(\X, t)$ be the flow map associated with the velocity field $u(\x(\X, t), t)$, i.e, 
$$ 
\frac{\dd}{\dd t}\x (\X, t) = \uvec(\x(\X, t), t) , 
$$ 
where $\X$ are Lagrangian coordinates. In Lagrangian coordinates, the conservation of mass indicates that
\begin{equation*}
\rho(\x(\X, t), t) = \frac{\rho_0(\X)}{\det {\sf F}}, 
\end{equation*}
where ${\sf F} = \nabla_{\X} \x(\X, t)$ is the deformation tensor associated with the flow map $\x(\X, t)$. 
Thus, the energy-dissipation law (\ref{Diffusion}) can be reformulated as
\begin{equation}
\frac{\dd}{\dd t} \int_{\Omega_0} \omega \left( \frac{\rho_0}{\det {\sf F}}  \right)   \dd \X = - \int \xi \left( \frac{\rho_0}{\det {\sf F}} \right) |\x_t|^2 \det {\sf F} \dd \X.
\end{equation}
in Lagrangian coordinates.
Performing the LAP, i.e., computing a variation of $\mathcal{A} = \int_{0}^{T} - \mathcal{F} \dd t $ with respect to $\x(\X, t)$, leads to 
\begin{equation}\label{LAP_Lag}
  \begin{aligned}
    & \delta \mathcal{A} = - \delta \int_{0}^T  \int_{\Omega_0} \omega(\rho_0(X)/ \det {\sf F}) \det {\sf F} \dd \X \\
    & = - \int_{0}^T \int_{\Omega_0} \left( - \omega_{\rho} \left( \frac{\rho_0(X)}{\det {\sf F}} \right) \frac{\rho_0(X)}{\det {\sf F}} + \omega\left(\frac{\rho_0(X)}{\det {\sf F}}\right) \right) ({\sf F}^{-\rm{T}} : \nabla_{\X} \delta \x)\det {\sf F}  \dd \X, \\
      \end{aligned}
\end{equation}
where $\widetilde{\delta \x}(\x(\X, t), t) = \delta \x(\X, t)$ is a test function satisfying $\widetilde{\delta \x} \cdot \n = 0$ and $\n$ is the outer normal of $\Omega$. Pushing forward to the Eulerian coordinate and applying the integration by parts, we have
\begin{equation}
  \begin{aligned}
\delta \mathcal{A} & = - \int_{0}^T \int_{\omega} ( - \omega_\rho \rho + \omega) \nabla \cdot (\delta \x) \dd \x 
 = \int_{0}^T \int_{\Omega} - \nabla \cdot (\omega_\rho \rho - \omega) \cdot \delta \x  \dd \x.
  \end{aligned} 
\end{equation}
Hence, 
$$
  \frac{\delta \mathcal{A}}{\delta \x} = - \nabla \cdot (\omega_\rho \rho - \omega) = - \nabla \mu, 
  \quad \mu = \omega_{\rho} = \frac{\delta \mathcal{F}}{\delta \rho},
$$ 
where $\mu$ is known as the chemical potential. In the meantime, the MDP yields $\dfrac{\delta \mathcal{D}}{\delta \x_t} = \xi(\rho) |\x_t|$. So the force balance (\ref{FB}) leads to the equation for $\uvec$, given by
\begin{equation}\label{eq_u}
\xi(\rho) \uvec = - \rho \nabla \mu.
\end{equation}

\begin{remark}
  Formally, the equation of $\uvec$ can be derived by using the fact that
  \begin{equation}
    \begin{aligned}
  \frac{\dd}{\dd t} \int \rho \ln \rho \dd \x & = \int (\ln \rho + 1) \rho_t \dd \x = \int (\ln \rho + 1) (- \nabla \cdot (\rho \uvec)) \dd \x \\
  & = \int \nabla  (\ln \rho + 1) \cdot (\rho \uvec) \dd \x . \\
    \end{aligned}
  \end{equation}
Therefore, the energy-dissipation law (\ref{Diffusion}) indicates that $\xi(\rho) \uvec = - \rho \nabla \mu$, which corresponds to the principle of virtual work.
\end{remark}

A combination of (\ref{eq_u}) with (\ref{kinematic}) leads to a diffusion equation
\begin{equation}\label{diffusion1}
\rho_t = \nabla \cdot \left (  \frac{\rho^2}{\xi(\rho)}  \nabla \mu(\rho) \right),  
\end{equation}
which can be rewritten as
\begin{equation}
\rho_t = \nabla \cdot \left(  D(\rho)  \nabla \rho\right),
\end{equation}
where $D(\rho) = \rho^2 \mu'(\rho) / \xi(\rho)$ is the nonlinear diffusion coefficients.
We refer the readers to \cite{giga2017variational, liu2019lagrangian} for more detailed descriptions. 

The energy-dissipation law (\ref{Diffusion}) has a natural $L^2$-gradient flow structure in the space of diffeomorphism, which provides a basis to study a structure preserving Lagrangian method for generalized diffusions \cite{carrillo2018lagrangian, junge2017fully, liu2019lagrangian}. To develop a energy stable Eulerian method, it is more convenient to formulate the energy dissipation law as
  \begin{equation}
    \frac{\dd}{\dd t} \int \omega(\rho) \dd \x = - \int \mathcal{M}(\rho) |\nabla \mu|^2 \dd \x,
  \end{equation}
 where $\mathcal{M}(\rho) = \dfrac{\rho^2}{\xi(\rho)}$ is called the mobility function, and $\mu$ is the chemical potential.

\subsection{EnVarA to reaction kinetics}


In general, a chemical reaction system containing $N$ species $\{ X_1, X_2, \ldots X_N \}$ and $M$ reactions can be represented by 
\begin{equation}
\ce{ $\alpha_{1}^{l} X_1 + \alpha_{2}^{l}X_2 + \ldots \alpha_{N}^{l} X_N$ <=> $\beta_{1}^{l} X_1 + \beta_{2}^{l}X_2 + \ldots \beta_{N}^{l} X_N$}, \quad l = 1, \ldots, M.
\end{equation}
Here we assume that all the reactions are reversible \cite{ge2017mathematical}. From a modeling perspective, an irreversible chemical reaction can be viewed as a singular limit of a reversible one \cite{gorban2013thermodynamics}.
In a spatially homogeneous case, the macroscopic kinetics of this system is described by a system of ordinary differential equations
\begin{equation}\label{CR_ODE}
\frac{\dd \bm{c}}{\dd t}  = \bm{\sigma} \bm{r}(\bm{c}(t), t),  
\end{equation}
where $\bm{c} = (c_1, c_2, \ldots, c_N)^{\rm T} \in \mathbb{R}^N_{+}$ denotes the concentrations of all species, $\bm{r} = (r_1, r_2 \ldots, r_M)^{\rm T} \in \mathbb{R}^M$ represents the corresponding reaction rates,  and $\bm{\sigma} \in \mathbb{R}^{N \times M}$ is the stoichiometric matrix, with the $(i, l)$ element given by $\sigma_{i}^l = \beta^l_i - \alpha^l_i$. 
The system (\ref{CR_ODE}) admits  $N - rank ({\bm \sigma})$ linearly independent conserved quantities (invariants), due to the fact that 
\begin{equation}\label{conserv}
\frac{\dd}{\dd t} (\bm{e} \cdot c) = \bm{e} \cdot \bm{\sigma} \bm{r}(\bm{c}(t), t) = 0,
\end{equation}
for $\bm{e} \in Ker(\bm{\sigma}^{\rm T})$. 
One challenge in designing an efficient numerical method for reaction kinetics system is to preserve the conservation properties (\ref{conserv}), as well as the positivity of ${\bm c}$, after numerical discretization is taken into consideration \cite{formaggia2011positivity, sandu2001positive}.

In classical chemical kinetics, it is often assumed that the reaction rate is directly proportional to the product of the concentrations of the reactants, known as the \emph{Law of Mass Action}, that is
\begin{equation}\label{LMA}
r_l (\bm{c}) = k_{l}^+ {\bm c}^{{\bm \alpha}^l} -  k_{l}^- {\bm c}^{{\bm \beta}^l} = 0,
\end{equation}
where $k_{l}^+$ and $k_{l}^-$ are the forward and backward reaction constants for the $l$-th reaction, and
\begin{equation}
{\bm c}^{{\bm \alpha}^l}  = \prod_{i=1}^N c_i^{\alpha_i^l}, \quad {\bm c}^{{\bm \beta}^l}  = \prod_{i=1}^N c_i^{\beta_i^l} . 
\end{equation}
It has been discovered for a long time that there exists a Lyapunov function for the reaction kinetics (\ref{CR_ODE}) with law of mass action (\ref{LMA}), if the system satisfies the \emph{detailed balance} condition \cite{desvillettes2017trend, mielke2011gradient, shear1967analog}, which is defined as follows \cite{desvillettes2017trend}. 
\begin{definition}
  An equilibrium point $\bm{c}_{\infty} \in \mathbb{R}^N_{+}$ is said to be a
  \emph{detailed balance} equilibrium if and only if any forward reaction is balanced with its corresponding backward reaction at the equilibrium, i.e.,
  \begin{equation}
    k_{l+} {\bm c}_{\infty}^{{\bm \alpha}^l} = k_{l-} {\bm c}_{\infty}^{{\bm \beta}^l} ,  
    \quad l  = 1, \ldots M . 
  \end{equation}
A chemical reaction network is called detailed balance if it possesses a detailed balance equation.
\end{definition}
For a detailed balanced reaction system, the Lyapunov function of the system is given by
\begin{equation}\label{free_energy_c}
\mathcal{F}[c_i] = \sum_{i=1}^N c_i \left( \ln \left( \frac{c_i}{c_i^{\infty}}  \right) - 1 \right),
\end{equation}
where ${\bm c}^{\infty}$ is a detailed balance equilibrium.
The Lyapunov function can be understood through the thermodynamics argument \cite{mielke2017non}
\begin{equation}\label{free_energy_U}
\mathcal{F}[c_i] = \sum_{i=1}^N \left( c_i (\ln c_i - 1) + c_i U_i \right),
\end{equation}
where the first part corresponds to the energy, and $U_i$ is the internal energy per mole associated with each species. The choice of $U_i$ determines the equilibrium of the system $c_i^{\infty}$, i.e.,
\begin{equation}\label{U_c_eq}
\sum_{i=1}^N \sigma_i^l (\ln c_i^{\infty} + U_i) = 0, \quad l = 1, \ldots M . 
\end{equation}
\begin{remark}
It is worth mentioning that the form of free energy (\ref{free_energy_U}) is also valid for a reaction network without detailed balance, as if $U_i$ given. However, for arbitrary given $U_i$, the system may not admit a chemical equilibrium \cite{ederer2007thermodynamically}.
Under the detailed balance condition, one can simply specify $U_i = - \ln c_i^{\infty}$ \cite{mielke2011gradient}. 
\end{remark}


Like other variational principles, the classical energetic variational approach
cannot be applied to chemical reaction system directly. During the last couple of decades, many papers try to build an Onsager type variational theory for chemical reaction systems \cite{bataille1978nonequilibrium, biot1977variational, beris1994thermodynamics,mielke2011gradient, oster1974chemical}.
Recently, by using the concept of reaction trajectory ${\bm R}$, originally introduced by De Donder \cite{de1936thermodynamic}, an energetic variational formula of reactions kinetics has been proposed in~\cite{wang2020field}. 
The reaction trajectories ${\bm R} \in \mathbb{R}^M$, which is an analogy to the flow map $\x(\X, t)$ in a mechanical system. Roughly speaking, the $l-$th component of ${\bm R}(t)$ accounts for the number of the $l$-th reaction happens in the forward direction by time $t$.
Given an initial condition ${\bm c}(0) \in \mathbb{R}^N_{+}$, the value of ${\bm c(t)}$ can be expressed in terms of ${\bm R}$ as
\begin{equation}\label{c_R_1}
      \bm{c}(t) = \bm{c}(0) + \bm{\sigma} {\bm R}(t),
\end{equation}
where ${\bm R}(t) \in \mathbb{R}^M$ represents $M$ reaction trajectories in the system. 
 Equation (\ref{c_R_1}) can be viewed as the kinematic of a reaction network, which embodies the conservation property (\ref{conserv}). The positivity of ${\bm c}$ requires that 
$$
  \bm{\sigma} {\bm R}(t) + \bm{c}(0) > 0,  
$$
which is a constraint on ${\bm R}$.

The reaction trajectories was originally introduced by De Donder as a new state variable for a chemical reaction system \cite{de1927affinite, kondepudi2014modern}, and has been widely used in different theoretical approaches to study chemical reactions \cite{anderson2015lyapunov, oster1974chemical}.
The reaction rate ${\bm r}$ can be defined as $\dot{\bm R}$. We abbreviate $\dot{\bm R} = \frac{\dd}{\dd t} {\bm R}$ throughout this paper. The reaction rate ${\bm r}$ or $\dot{\bm R}$ can be viewed as the reaction velocity \cite{kondepudi2014modern}.
In the framework of the energetic variational approach, we can describe the reaction kinetic of the system by imposing the energy-dissipation law in terms of ${\bm R}$ and $\dot{\bm R}$, that is
\begin{equation}\label{ED_R}
\frac{\dd}{\dd t} \mathcal{F}[{\bm R}] = - \mathcal{D}_{\rm chem}[{\bm R}, \dot{\bm R}],
\end{equation}
where $\mathcal{D}_{\rm chem}[{\bm R}, \dot{\bm R}]$ is the rate of energy-dissipation (entropy production) due to the chemical reaction procedure. 

Unlike the mechanical part, the linear response assumption may not be valid for a chemical system until the last stage of chemical reaction \cite{de2013non}. Therefore, the energy-dissipation rate $\mathcal{D}_{\rm chem}$ may not be quadratic in terms of $\dot{\bm R}$~\cite{beris1994thermodynamics, de2013non}. To  deal with the nonlinear dissipation, we have to extend the classical energetic variational approach. 
A general nonlinear dissipation could be expressed as 
\begin{equation}
\mathcal{D}_{\rm chem}[{\bm R}, \dot{\bm R}] = \left( {\bm \Gamma}({\bm R}, \dot{\bm R}), \dot{\bm R}  \right) = \sum_{l = 1}^M \Gamma_l ({\bm R}, \dot{\bm R})  \dot{R}_l \geq 0,
\end{equation}
Notice that 
\begin{equation}
  \frac{\dd}{\dd t} \mathcal{F} = \left(\frac{\delta \mathcal{F}}{\delta {\bm R}}, \dot{\bm R}   \right)  = \sum_{l=1}^M \frac{\delta \mathcal{F}}{\delta R_l}  \dot{R}_{l}.
\end{equation}
which indicates that
\begin{equation}\label{R1}
\Gamma_l ({\bm R}, \dot{\bm R}) = - \frac{\delta \mathcal{F}}{\delta R_l},
\end{equation}
which can be viewed as a nonlinear gradient flow on ${\bm R}$. It is interesting to point out that 
\begin{equation} \label{notation-mu-1} 
\frac{\delta \mathcal{F}}{\delta R_l} = \sum_{i=1}^N \frac{\delta \mathcal{F}}{\delta c_i} \frac{\delta c_i}{\delta R_l} =\sum_{i=1}^N \sigma_i^l \mu_i,
\end{equation}
is known as the chemical \emph{affinity}, where $\mu_i =  \frac{\delta \mathcal{F}}{\delta c_i}$ is the chemical potential of $i-$th species. The chemical affinity is the driving force of the chemical reaction \cite{de1927affinite, de1936thermodynamic, kondepudi2014modern}. The choice of dissipation links reaction rate $\dot{\bm R}$ and the chemical affinity. Different choices of dissipation specify different reaction rates.

A typical form of $\mathcal{D}_{\rm chem}[{\bm R}, \dot{\bm R}]$ is given by \cite{wang2020field}
$$ 
  \mathcal{D}_{\rm chem}[{\bm R}, \dot{\bm R}] = \sum_{l=1}^M \dot{R_l} \ln \left( \frac{\dot{R_l}}{ \eta_l ({\bm c})} + 1  \right), 
$$
where $\eta_l({\bm c})$ is the mobility for the $l$-th reaction. Then (\ref{R1}) becomes
\begin{equation}
  \ln \left( \frac{\dot{R_l}}{ \eta_l (R_l)} + 1  \right) = - \sum \sigma_i^{l} \mu_i, \quad l = 1,2, \ldots M.
\end{equation}
The law of mass action can be derived by choosing $\eta_l({\bm c}) = k_{l}^- {\bm c}^{{\bm \beta}^l}$.


As an illustration, we consider a single chemical reaction
\begin{equation}\label{reaction_3}
\ce{ \alpha_1 X_1 + \alpha_2 X_2 <=>[k_1][k_2] \beta_3 X_3},
\end{equation}
with $\bm{\sigma} = (-\alpha_1, - \alpha_2, \beta_3)^{\rm T}$. According to the previous discussion, the variational approach gives
\begin{equation}\label{Eq_R_3}
  \ln \left( \frac{\pp_t R}{k_{1}^{-} c_3^{\beta_3}} + 1 \right) = - \frac{\pp \mathcal{F}}{\pp R},
\end{equation}
where
\begin{equation} \label{pp_ER}
    \frac{\pp \mathcal{F}}{\pp R} = \sum_{i = 1}^3 \sigma_i \mu_i = \sum_{i = 1}^3 \sigma_i (\ln (c_i) + U_i) 
\end{equation}
is the affinity of this chemical reaction. Let ${\bm c} = (c_1^{\infty}, c_2^{\infty}, c_3^{\infty})^{\rm T}$ be a detailed balance equilibrium of the system satisfying $k_1^{+} (c_1^{\infty})^{\alpha_1} (c_2^{\infty})^{\alpha_2} = k_1^- c_3^{\beta_3}$. Recalling (\ref{U_c_eq}), we have
\begin{equation}
\ln K_{eq} = \ln \left( \frac{k_1^+}{k_1^-} \right) = \ln \left( \frac{c_3^{\beta_3}}{(c_1^{\infty})^{\alpha_1} (c_2^{\infty})^{\alpha_2}} \right) = U_1 + U_2 - U_3 . 
\end{equation}
Hence, from (\ref{Eq_R_3}), we obtain
\begin{equation}
  \begin{aligned}
    \pp_t R &  = k_{1}^{-} c_3^{\beta_3} \left(  \frac{1}{K_{eq}} \frac{ c_1^{\alpha_1} c_2^{\alpha_2}}{ c_3^{\beta_3}} - 1 \right) =  k_{1}^{+}c_1^{\alpha_1} c_2^{\alpha_2} -  k_{1}^{-} c_3^{\beta_3}, 
  \end{aligned}
\end{equation}
which is exactly the law of mass action.


\subsection{EnVarA to reaction-diffusion systems}

The above energetic variational formulation for reaction kinetics with detailed balance enables us to model a large class of reaction-diffusion systems in a unified variational way. In general, for a reaction-diffusion system with $N$ species and $M$ reactions, the concentration ${\bm c} \in \mathbb{R}^N$ satisfies the following kinematics
\begin{equation}\label{Kin_1}
 \pp_t c_i + \nabla \cdot (c_i \uvec_i) = \left( {\bm \sigma} \dot{\bm R} \right)_i, \quad i = 1, 2, \ldots N , 
\end{equation}
where $\uvec_i$ is the average velocity of each species due to its own diffusion,  ${\bm R}(\x, t) \in \mathbb{R}^M$ represents the reaction trajectory defined at $\x$, $\dot{\bm R}$ is the time derivative of ${\bm R}$, and ${\sigma} \in \mathbb{R}^{N \times M}$ being the stoichiometric matrix as defined in section 2.2. 
The reaction-diffusion equation can be modeled through the energy-dissipation law \cite{biot1982thermodynamic, wang2020field}: 
\begin{equation}\label{ED_RD}
    \frac{\dd}{\dd t} \mathcal{F}[c_i]  =  - (2 \mathcal{D}_{\rm mech} + \mathcal{D}_{\rm chem}), 
\end{equation}
where the free energy $\mathcal{F}[c_i]$ is the one given in (\ref{free_energy_U}),  
$2 \mathcal{D}_{\rm mech}$ and $\mathcal{D}_{\rm chem}$ are dissipation terms for the mechanical and reaction parts respectively. It is important to notice that the reaction and diffusion parts share the same free energy. As discussed in section 2.1, $\mathcal{D}_{\rm mech}$ can be taken as
$$
  2 \mathcal{D}_{\rm mech} =  \int_{\Omega} \sum_{i=1}^N \xi_i(c_i) |\uvec_i|^2 \dd \x, 
$$ 
where $\xi_i$ is the friction coefficient. Here we assume that $\xi_i$ only depends on $c_i$ for simplicity. In the meantime, the dissipation for the reaction part is assumed to be
\begin{equation*}
  \mathcal{D}_{\rm chem} =  \int_{\Omega} \sum_{l=1}^M \dot{R_l} \ln \left(  \frac{\dot{R_l}}{\eta_l({{\bm c}({\bm R})})} \right) \dd \x.
\end{equation*}
One can view this system as a nonlinear gradient flow on ${\bm R}$, coupled with generalized diffusions on $c_i$.

We can employ the EnVarA to obtain equations for the reaction and diffusion part respectively, i.e., the ``force balance equation'' of the chemical and mechanical subsystems. Indeed, a direct computation yields 
\begin{equation}
\begin{aligned}  
\frac{\dd}{\dd t} \mathcal{F}[{\bm c}] & = \int \sum_{i=1}^N \frac{\delta \mathcal{F}}{\delta c_i} \pp_t c_i \dd \x = \int \sum_{i=1}^N \frac{\delta \mathcal{F}}{\delta c_i} ( -\nabla \cdot (c_i {\bm u}_i) + ({\bm \sigma} \dot{\bm R})_i )\dd \x \\
& = \int \sum_{i=1}^N  \left( c_i \nabla  \mu_i \cdot {\bm u}_i + c_i \sum_{l=1}^M \sigma_i^l \dot{R_l}  \right) \dd \x = \sum_{i=1}^N \left( c_i \nabla \mu_i, \uvec_i  \right) + \sum_{l=1}^M \left( \sum_{i=1}^N \sigma_{i}^l \mu_i,  \dot{R_l}  \right), \\
\end{aligned}
\end{equation}
which indicates that 
\begin{equation}
  \begin{cases}
    & \xi_i(c_i) \uvec_i =  - c_i \nabla \mu_i, \quad i = 1, 2, \ldots N , \\
    &  \ln \left(  \frac{\dot{R_l}}{\eta_l({{\bm c}({\bm R})})} \right) = - \sum_{i=1}^N \sigma_i^l \mu_i, \quad l = 1, \ldots, M . \\
  \end{cases}
\end{equation}
By taking $\xi_i(c_i) = \frac{1}{D_i}c_i$, we have a reaction-diffusion system
\begin{equation}
\pp_t c_i = D_i \Delta c_i + ({\bm \sigma} \dot{\bm R})_i.
\end{equation}
Here $({\bm \sigma} \dot{\bm R})_i$ is the reaction term, the form of which has been discussed in the last subsection. Other choices of $\xi_i(c_i)$ can result in some porous medium type nonlinear diffusion \cite{liu2019lagrangian}, given by
\begin{equation}\label{RD_final}
\pp_t c_i = \nabla \cdot (D(c_i) \nabla c_i) + ({\bm \sigma} \dot{\bm R})_i,
\end{equation}
where $D(c_i) = \frac{c_i}{\xi_i(c_i)}$ is the concentration-dependent diffusion coefficient.

\section{The numerical method}

In this section, we introduce the operator splitting scheme for the reaction-diffusion system (\ref{RD_final}) with the energy-dissipation law (\ref{ED_RD}). Notice that the dissipation in (\ref{ED_RD}) consists of two parts: a gradient flow type dissipation on the reaction trajectory ${\bm R}$, and dissipations for generalized diffusions. It is natural to apply an operator splitting scheme for the time integration to preserve the variational structures, and make the discrete energy non-increasing. More precisely, the reaction-diffusion system (\ref{RD_final}) can be written as
\begin{equation}\label{reaction-diffusion-1}
{\bm c}_t = \mathcal{A}{\bm c} + \mathcal{B}{\bm c},
\end{equation}
where $\mathcal{A}$ is the reaction operator and $\mathcal{B}$ is the diffusion operator.
We propose the following operator splitting scheme. For simplicity, we present the numerical algorithm on the computational domain $\Omega = (0,1)^3$ with a periodic boundary condition, and $\Delta x = \Delta y = \Delta z = h = \frac{1}{N_0}$ with $N_0$ to be the spatial mesh resolution throughout this section; other computational domains with other boundary conditions and other numerical meshes could be handled in a similar fashion.
Given ${\bm c}^n$ with ${\bm c}_{i,j, k}^n \in \mathbb{R}^N_{+}$, we update ${\bm c}^{n+1}$, via the following two stages:
\begin{enumerate}
  \item  {\bf Reaction stage:} \, 
Starting with ${\bm c}^n$, we solve for ${\bm c}_t = \mathcal{A} {\bm c}$ using a positivity-preserving, energy stable numerical scheme for the reaction kinetics at each mesh point, and get ${\bm c}^{n+1, *}$ satisfying
\begin{equation} 
\mathcal{F}_h ({\bm c}^{n+1, *}) \leq \mathcal{F}_h ({\bm c}^{n}),  \quad 
\mbox{$\mathcal{F}_h ({\bm c}): = \langle {\cal F} ({\bm c}), {\bf 1} \rangle$ is the discrete energy} ,   
     \label{splitting-R}     
  \end{equation} 
in which ${\cal F} ({\bm c})$ is given by~\eqref{free_energy_U}, and $\langle f , g \rangle = h^3 \sum_{i,j,k=0}^{N_0-1} f_{i,j,k} g_{i,j,k}$ denotes the discrete $L^2$ inner product.


\item {\bf Diffusion stage:} \, Starting with the intermediate variables 
${\bm c}^{n+1, *}$, 
we update ${\bm c}^{n+1}$ by a suitable positive preserving, and energy stable numerical scheme for diffusion equations such that
\begin{equation} \label{splitting-D} 
\mathcal{F}_h({\bm c}^{n+1}) \leq \mathcal{F}_h({\bm c}^{n+1, *}) . 
\end{equation}
Therefore, a combination of~\eqref{splitting-R} and \eqref{splitting-D} results in 
\begin{eqnarray} 
    \mathcal{F}_h ({\bm c}^{n+1})  \le \mathcal{F}_h ({\bm c}^{n})  . 
     \label{splitting-Field-B-stability}     
\end{eqnarray} 

\end{enumerate}
In the remainder of this section, we will outline the details of the numerical algorithm  
at each stage. 
The convergence and error estimates will be left to the future works. We focus on a single reversible chemical reaction with detailed balance, given by
\begin{equation}\label{reaction_sec3}
\ce{\alpha_1 X_1 + \ldots \alpha_r X_r <=>[k^+_1][k^-_1] \beta_{r+1} X_{r+1} + \ldots \beta_N X_N},
\end{equation}
where $k^+_1$ and $k^-_1$ are constants. The proposed numerical scheme can be applied to reaction-diffusion equations involving $M$ reactions with detailed balance condition directly, but a theoretical justification for the reaction stage will be much more complicated. 


\subsection{Numerical scheme for the reaction stage}
We first introduce a positivity-preserving, energy stable scheme for reaction kinetics, which will be used for the reaction stage in the operator scheme.
Many numerical schemes for reaction kinetics have been developed in the existing literature, such as semi-implicit Runge-Kutta~\cite{alexander1977diagonally}, Patankar-type methods \cite{huang2019positivity, patankar2018numerical}; we refer the interested readers to \cite{formaggia2011positivity} for detailed discussions.

Unlike these existing algorithms, the numerical scheme presented here is based on the energy-dissipation law (\ref{ED_R}). The main idea is to discretize the reaction trajectory ${\bm R}$ instead of the concentration field ${\bm c}$ , so that the conservation property (\ref{conserv}) is automatically satisfied.
Recall that the energy-dissipation law of (\ref{reaction_sec3}) for the reversible chemical reaction (\ref{reaction_sec3}) can be formulated as 
\begin{equation}
\frac{\dd}{\dd t} \mathcal{F} [{\bm c}(R)] = - \dot{R} \ln \left( \frac{\dot{R}}{\eta({\bm c}(R))} + 1 \right),
\end{equation}
in a spatial homogeneous case, where $$\mathcal{F} [{\bm c}] = \sum_{m=1}^N c_m (\ln c_m - 1) + c_m U_m,$$
${\bm c}(R) = {\bm c}_{0} + {\bm \sigma} R$ with ${\bm \sigma} = (- \alpha_1, \ldots -\alpha_r, \beta_{r+1}, \ldots \beta_{N})^{\rm T}$, and $\eta( {\bm c} (R)) = k_1^{-} \prod_{m = r+1}^N c_i^{\beta_i}$.
As shown in section 2.2, the equation of $R$ can be derived as
\begin{equation}\label{eq_R_sec3}
\ln \left(  \frac{\dot{R}}{ \eta( {\bm c} (R) )} + 1 \right) = - \sum_{m=1}^N \sigma_m \mu_m (R),
\end{equation}
where $\mu_m (R) = \frac{\delta \mathcal{F}}{\delta c_m} = \ln c_m (R) + U_m$ is the chemical potential of $m$-th species determined by $R$.
Based on \eqref{eq_R_sec3}, we propose the following semi-implicit scheme
\begin{equation}\label{scheme_R}
\ln \left(  \frac{R^{n+1} - R^n}{ \eta( {\bm c} (R^{n})) \Delta t} + 1 \right) =  - \sum_{m=1}^N \sigma_m \mu_m(R^{n+1}),
\end{equation}
which takes the mobility $\eta ({\bm c}(R))$ explicitly and other parts implicitly. An explicit treatment for $\eta ({\bm c}(R))$ is crucial for the following unique solvability analysis.
\begin{theorem} \label{Positivity: stage 1} 
  Given $R^n$, with ${\bm c}(R^n) \in \mathbb{R}_{+}^N$ and $\eta( {\bm c} (R^n) ) > 0$, there exists a unique solution $R^{n+1}$ for the numerical scheme (\ref{scheme_R}) in the admissible set
  $$\mathcal{V}^n = \{ R ~|~  {\bm c}(R) \in \mathbb{R}_{N}^+, R - R^n + \eta( {\bm c} (R^n) ) \dt > 0 \} , 
$$
so that ${\bm c}(R^{n+1}) \in \mathbb{R}_{+}^N$ and the numerical scheme is well-defined.
\end{theorem}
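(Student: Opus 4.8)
The plan is to exhibit equation~\eqref{scheme_R} as the Euler--Lagrange equation of a strictly convex function that blows up at the boundary of the admissible set $\mathcal V^n$, so that unique solvability and positivity follow from a one-dimensional minimization argument --- in the spirit of the convexity-plus-singularity analysis used for the Flory--Huggins Cahn--Hilliard equation in~\cite{chen19b}. Fix the given positive number $\eta^n := \eta({\bm c}(R^n))$ and write $v(R) := R - R^n + \eta^n\dt$, so that $\mathcal V^n = \{ R : {\bm c}(R)\in\mathbb R_{+}^N,\ v(R)>0 \}$; recall ${\bm c}(R) = {\bm c}_0 + {\bm\sigma}R$, so each $c_m(R)$ is affine in $R$ with slope $\sigma_m$, where $\sigma_m = -\alpha_m < 0$ for $m\le r$ and $\sigma_m = \beta_m > 0$ for $m > r$. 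On $\mathcal V^n$ I would set
\[
  J(R) \ :=\ \mathcal F\bigl({\bm c}(R)\bigr) \ +\ v(R)\bigl(\ln v(R)-1\bigr) \ -\ \ln(\eta^n\dt)\,v(R),
\]
with $\mathcal F$ the free energy~\eqref{free_energy_U}. Using $\frac{\dd}{\dd R}\mathcal F({\bm c}(R)) = \sum_{m} \sigma_m\mu_m(R)$, a direct computation gives
\[
  J'(R) \ =\ \ln\!\left(\frac{R-R^n}{\eta^n\dt}+1\right) + \sum_{m=1}^N \sigma_m \mu_m(R),
\]
so that $J'(R^{n+1})=0$ is precisely~\eqref{scheme_R}. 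Note that the last two terms of $J$ are exactly the primitive obtained by freezing the mobility at step $n$; had $\eta$ been treated implicitly, $\ln\eta({\bm c}(R)) = \ln k_1^- + \sum_{m>r}\beta_m\ln c_m(R)$ would enter $J'$ with a concave sign and the convexity used below would be lost --- this is the reason the explicit treatment of $\eta$ matters.

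First I would check that $\mathcal V^n$ is a nonempty open interval $(a,b)$: each constraint $c_m(R)>0$ cuts out a half-line in $R$ (bounded above when $m\le r$, bounded below when $m>r$), and so does $v(R)>0$; the hypotheses ${\bm c}(R^n)\in\mathbb R_{+}^N$ and $\eta^n\dt>0$ say precisely that $R^n$ satisfies all of them strictly, hence $a<R^n<b$. Then I would note that on $(a,b)$ the function $J$ is smooth with
\[
  J''(R) \ =\ \frac{1}{v(R)} + \sum_{m=1}^N \frac{\sigma_m^2}{c_m(R)} \ >\ 0,
\]
so $J$ is strictly convex; equivalently $J'$ is continuous and strictly increasing, and therefore~\eqref{scheme_R} has at most one solution in $\mathcal V^n$.

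The main point --- and where the singular nature of the logarithm does the work --- is the boundary behavior of $J'$. As $R\to b^-$, some reactant constraint $c_{m^\ast}(R)\to 0^+$ with $m^\ast\le r$ becomes active, so that $\sigma_{m^\ast}\ln c_{m^\ast}(R) = -\alpha_{m^\ast}\ln c_{m^\ast}(R)\to +\infty$, while the remaining terms of $J'$ stay bounded below: $\ln\bigl((R-R^n)/(\eta^n\dt)+1\bigr)$ is finite and positive since $b>R^n$, the products satisfy $c_m(R) > c_m(R^n) > 0$ for $R$ near $b$, and any further reactant reaching $0$ only adds $+\infty$; hence $J'(R)\to +\infty$. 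As $R\to a^+$, either $v(R)\to 0^+$, whence $\ln\bigl((R-R^n)/(\eta^n\dt)+1\bigr)\to -\infty$, or some product constraint $c_{m^\ast}(R)\to 0^+$ with $m^\ast>r$ becomes active, whence $\sigma_{m^\ast}\ln c_{m^\ast}(R) = \beta_{m^\ast}\ln c_{m^\ast}(R)\to -\infty$; in either case the remaining terms stay bounded above (reactants satisfy $c_m(R) > c_m(R^n) > 0$ for $R$ near $a$, as $a<R^n$), so $J'(R)\to -\infty$.

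Finally, strict monotonicity of $J'$ together with these two one-sided limits gives, via the intermediate value theorem, a unique $R^{n+1}\in(a,b)=\mathcal V^n$ with $J'(R^{n+1})=0$; and since~\eqref{scheme_R} is meaningless unless $v(R)>0$ and ${\bm c}(R)\in\mathbb R_{+}^N$, this $R^{n+1}$ is in fact its only solution anywhere. Because $R^{n+1}\in\mathcal V^n$, it follows that ${\bm c}(R^{n+1})\in\mathbb R_{+}^N$ and $v(R^{n+1})>0$, so every logarithm in~\eqref{scheme_R} is well defined and the scheme is well-posed. The main obstacle is the boundary analysis --- the divergence of $J'$ to $\mp\infty$ at the two endpoints, which is what upgrades the (easy) uniqueness to existence of an \emph{interior} solution; the care needed there is that the binding constraint at an endpoint may be a concentration constraint or the $v>0$ constraint (or several at once), so all cases must be checked, the sign pattern $\sigma_m=-\alpha_m<0$ on reactants versus $\sigma_m=\beta_m>0$ on products being exactly what makes $J'$ blow up with the correct sign at each end.
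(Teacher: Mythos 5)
Your proposal is correct and follows essentially the same route as the paper: you build the same convex functional (your $J$ differs from the paper's $J_n$ only by an additive constant and an affine shift), use the same strict convexity observation, and exploit the same logarithmic blow-up of $J'$ near the boundary of $\mathcal{V}^n$. The only cosmetic difference is that you run the intermediate value theorem on $J'$ over the open interval directly, whereas the paper minimizes $J_n$ over a compact $\delta$-shrunk set $\mathcal{V}_\delta^n$ and rules out boundary minimizers by the same derivative-sign computation.
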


\begin{proof}
We denote 
  \begin{equation}\label{def_J}
    J_n(R) =   (R - R^n + \eta( {\bm c} (R^n) ) \dt ) \ln \left(  \frac{R - R^n}{ \eta( {\bm c} (R^{n}) ) \dt} + 1 \right) - R +  \mathcal{F}[{\bm c}(R)].
  \end{equation}
  It is easy to see that $J(R)$ is a strictly convex function over $\mathcal{V}^n$. Indeed, a direct computation shows that
  \begin{equation}
    \frac{\dd^2}{\dd R^2} (R - R^n + \eta( {\bm c} (R^n) ) \dt ) \ln \left(  \frac{R - R^n}{ \eta( {\bm c} (R^{n}) ) \dt}  + 1 \right) = \frac{1}{R - R^n + \eta( {\bm c} (R^n) )} > 0.
  \end{equation}
Since $\mathcal{F}[{\bm c}(R)]$ is strictly convex function of ${\bm c} \in \mathbb{R}_N^{+}$, it is clear that $J_n(R)$ is strictly convex over $\mathcal{V}^n$.

  The key point of the proof is to show that the minimizer of $J_n(R)$ over $\mathcal{V}^n$ could not occur on the boundary of $\mathcal{V}^n$, so that a minimizer gives a solution of a numerical scheme (\ref{scheme_R}) in $\mathcal{V}^n$.
 To prove this fact, we consider the following closed domain: 
\begin{eqnarray} 
  \mathcal{V}_{\delta}^n = \left\{ R ~|~ c_i(R) \ge \delta, \quad R - R^n + \eta( {\bm c} (R^n) ) \dt \ge \delta \right\} \subset \mathcal{V}. 
  \label{Field-positive-2} 
\end{eqnarray}   
In the case with single chemical reaction, since
\begin{equation}
  \begin{aligned}
    & c_m^0 + \sigma_m R \ge \delta \Rightarrow   \max_{r+1 \leq m \leq N } \left( \frac{1}{\beta_m} (\delta - c_m^0) \right)   \le   R  \le  \min_{1 \leq m \leq r} \left( - \frac{1}{\alpha_m} (\delta - c_m^0) \right) , 
  \end{aligned}
\end{equation}
it is easy to show that
\begin{equation*} 
\begin{aligned} 
  &
  V_{\delta}^n = \left(  R^n - \eta( {\bm c} (R^n) ) \Delta t + \delta \le R \le  \min_{1 \leq m \leq r} \frac{1}{\alpha_m} (c_m^0 - \delta)   \right) ~\text{or}~  \\
  & V_{\delta}^n = \left(  \max_{r+1 \leq m \leq N} \frac{1}{\beta_m} (\delta - c_m^0) \le R \le \min_{1 \leq m \leq r} \frac{1}{\alpha_m} (c_m^0 - \delta)   \right) . 
\end{aligned} 
\end{equation*}
Thus, $\mathcal{V}_{\delta}^n$ is a bounded, compact set in the real domain and there exists a minimizer of $J_n (R)$ over $\mathcal{V}_{\delta}^n$. We need to show that such a minimizer could not occur on the boundary points of $\mathcal{V}_\delta^n$ if $\delta$ is small enough. 

Assume the minimizer of $J_n (R)$ occurs at a boundary point of $\mathcal{V}_{\delta}^n$. Without loss of generality, we set the minimization point as $R^*= R^n - \eta( {\bm c} (R^n) ) \Delta t + \delta$. A careful calculation reveals the following derivative
\begin{eqnarray} 
 J'_n (R) \mid_{R=R^*}
  &=&  \ln \Big(  \frac{\delta - \eta( {\bm c} (R^n) ) \Delta t}{\eta( {\bm c} (R^n) ) \Delta t}  + 1 \Big) + \sum_{m=1}^N \sigma_m \mu_m (R^*)  \nonumber \\
  &=&  \ln  \delta - \ln (\eta( {\bm c} (R^n) ) \Delta t) + \sum_{m=1}^N\sigma_i \ln c_m (R^*) + \sum_{m=1}^N\sigma_m U_m   \nonumber 
\\
 &\le& 
 \ln  \delta  + \sum_{m=1}^N\sigma_m \ln c_m (R^n - \eta( {\bm c} (R^n) ) \Delta t)  + Q^{(0)},
   \label{Field-positive-4} 
\end{eqnarray} 
where $Q_0 = - \ln (\eta({\bm c} (R^n) ) \Delta t) + \sum_{m=1}^N \sigma_m U_m$.
Since $\sum_{m=1}^N\sigma_m \ln c_m (R^n - \eta( {\bm c} (R^n) ) \Delta t)$ could be taken as a fixed constant, at a fixed time step, we can choose $\delta$ small enough such that
\begin{eqnarray} 
  J'_n (R) \mid_{R=R^*} < 0, 
\end{eqnarray} 
As a result, there exists $R^{**} =  R^n - \eta( {\bm c} (R^n) ) \Delta t + \delta + \delta' \in V_\delta^n$ such that
\begin{eqnarray} 
  J_n (R^{**}) < J_n (R^{*}) .  \label{Field-positive-14} 
\end{eqnarray} 
This makes a contradiction to the assumption that $J (R)$ reaches a minimization point at $R^*$ over $V_{\delta}^n$. 
Using similar arguments, with $\delta$ sufficiently small, if $R^*= \min_{1 \leq m \leq r} \frac{1}{\alpha_m} (c_m^0 - \delta)$, we have 
$$ 
  J'_n (R) \mid_{R=R^*}  > 0 .  
$$ 
Meanwhile, if $ \max_{r+1 \leq m \leq N} \frac{1}{\beta_m} (\delta - c_m^0)$, we will get $ J'_n (R) \mid_{R=R^*}  < 0$. 

As a result, since $J_n (R)$ is a strictly convex function over $\mathcal{V}_n$, the global minimum of $J_n (R)$ over $V_{\delta}^n$ could only possibly occur at an interior point, if $\delta$ is sufficiently small. We conclude that there exists a global minimizer $R^* \in (V_{\delta}^n)^{\mathrm{o}}$, the interior region of $V_{\delta}^n$, of $J_n(R^*)$, so that $J'_n (R) =0$, i.e., $R^*$ is the numerical solution of \eqref{scheme_R}. 
In addition, since $J_n (R)$ is a strictly convex function over $\mathcal{V}_n$, the uniqueness analysis for this numerical solution is straightforward. This completes the proof of Theorem~\ref{Positivity: stage 1}. 
\end{proof} 

\begin{remark} \label{rem: reaction solver} 
  The theorem states that there exists a unique global minimizer of $J_n(R)$ over $\mathcal{V}^n$, which is a solution of the nonlinear scheme (\ref{scheme_R}). 
  In the numerical implementation, it is important to choose a proper optimization method and initial condition, such that the unique minimizer of $J_n(R)$ in $\mathcal{V}^n$ can be found. In practice, we take the initial guess as $R^{n+1, 0} = \Delta t ( k_1^{+} \prod_{m = 1}^r (c_m^{n})^{\alpha_i} - k_1^{-} \prod_{m = r+1}^N (c_m^n)^{\beta_m} ) + R^n$ and use the gradient descent with Barzilai-Borwein (BB) step-size \cite{barzilai1988two} to solve the optimization problem. A backtracking line search is applied to ensure that $R^{n+1, k} \in \mathcal{V}^n$ at $k$-th iteration.
\end{remark}

The numerical scheme (\ref{scheme_R}) can be used in the operator splitting scheme by applying it to each mesh point at the reaction stage. With the positivity-preserving and unique solvability, we can prove its unconditional energy stability \eqref{splitting-R}.
	\begin{theorem} \label{Field-energy stability} 
    Given ${\bm c}^n$, with ${\bm c}_{i,j,k}^n \in \mathbb{R}^N_{+}$ at all mesh points. By setting ${\bm c}^{0} = {\bm c}^n$ and $R^n = 0$, and applying \eqref{scheme_R} at each mesh point, we have
    the following energy dissipation 
        \begin{equation} 
      {\cal F}_h [ {\bm c} (R^{n+1}) ] \le {\cal F}_h [ {\bm c} (R^n) ]. 
         \label{Field-energy-0}     
      \end{equation} 
  where $\mathcal{F}_h ({\bm c}): = \langle {\cal F} ({\bm c}), {\bf 1} \rangle$ is the discrete energy with ${\cal F} ({\bm c})$ is given by~\eqref{free_energy_U}, and ${\bm c}(R) = {\bm c}^0 + {\bm \sigma} R$.
\end{theorem}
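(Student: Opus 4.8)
The plan is to re-use the variational characterization of the reaction solver established in the proof of Theorem~\ref{Positivity: stage 1}: at each mesh point, $R^{n+1}$ is the unique global minimizer over $\mathcal{V}^n$ of the strictly convex functional $J_n$ from~\eqref{def_J}. Since the statement prescribes $R^n=0$ and $\mathbf{c}(R^n)=\mathbf{c}^0=\mathbf{c}^n\in\mathbb{R}^N_+$, the point $R=R^n$ itself belongs to $\mathcal{V}^n$ (indeed $R-R^n+\eta(\mathbf{c}(R^n))\dt=\eta(\mathbf{c}(R^n))\dt>0$), so the minimizing property gives, pointwise, the comparison $J_n(R^{n+1})\le J_n(R^n)$.

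Next I would evaluate both sides. The logarithmic term in~\eqref{def_J} vanishes at $R=R^n$, so $J_n(R^n)=-R^n+\mathcal{F}[\mathbf{c}(R^n)]$, while $J_n(R^{n+1})$ keeps its full form. Writing $a:=R^{n+1}-R^n$ and $b:=\eta(\mathbf{c}(R^n))\dt>0$ and rearranging $J_n(R^{n+1})\le J_n(R^n)$ yields
\[
  \mathcal{F}[\mathbf{c}(R^{n+1})]-\mathcal{F}[\mathbf{c}(R^n)]\;\le\; a-(a+b)\ln\!\Big(\tfrac{a}{b}+1\Big).
\]
The one analytic ingredient is then the elementary inequality: for every $b>0$ and $a>-b$,
\[
  a-(a+b)\ln\!\Big(\tfrac{a}{b}+1\Big)\le 0,
\]
with equality iff $a=0$. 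I would prove this by setting $t=a/b\in(-1,\infty)$ and studying $g(t)=t-(t+1)\ln(t+1)$: since $g'(t)=-\ln(t+1)$, $g$ increases on $(-1,0)$ and decreases on $(0,\infty)$, so $g(t)\le g(0)=0$. (Equivalently, one may combine the convexity gradient inequality $\mathcal{F}[\mathbf{c}(R^{n+1})]-\mathcal{F}[\mathbf{c}(R^n)]\le (R^{n+1}-R^n)\sum_{m}\sigma_m\mu_m(R^{n+1})$ with the scheme~\eqref{scheme_R} and the fact that $x\ln(\tfrac{x}{b}+1)\ge0$ for all $x>-b$, because $x$ and $\ln(\tfrac{x}{b}+1)$ have the same sign; this gives the even sharper bound $-a\ln(\tfrac{a}{b}+1)\le0$.) Either route closes the argument.

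Combining the two displays gives the pointwise decay $\mathcal{F}[\mathbf{c}(R^{n+1})]\le\mathcal{F}[\mathbf{c}(R^n)]$ at every mesh point $(i,j,k)$; multiplying by $h^3$ and summing over all mesh points then produces exactly~\eqref{Field-energy-0}, recalling $\mathbf{c}(R)=\mathbf{c}^0+\bm{\sigma}R$ with $\mathbf{c}^0=\mathbf{c}^n$.

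I do not expect a genuine obstacle here: the proof rests entirely on Theorem~\ref{Positivity: stage 1}, which already supplies existence of $R^{n+1}$, its minimizing property over $\mathcal{V}^n$, and positivity $\mathbf{c}(R^{n+1})\in\mathbb{R}^N_+$ so that $\mathcal{F}$, $\nabla\mathcal{F}$ and all logarithms are finite and well defined; the rest is the scalar inequality above plus bookkeeping. The one point I would state carefully is that the minimizing comparison needs $R^n\in\mathcal{V}^n$, which is exactly why the scheme is initialized with $R^n=0$ at each step; if one were to chain several reaction substeps one would have to re-verify this admissibility (or re-center $R$) before each substep.
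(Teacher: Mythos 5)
Your proposal is correct, and your primary argument takes a genuinely different route from the paper. The paper's proof is essentially your parenthetical alternative: it multiplies the scheme~\eqref{scheme_R} by $R^{n+1}-R^n$ at each mesh point, observes that the resulting dissipation term $\bigl\langle \ln\bigl(\tfrac{R^{n+1}-R^n}{\eta\dt}+1\bigr),\,R^{n+1}-R^n\bigr\rangle$ is nonnegative because the two factors share the same sign, and bounds the chemical-potential term from below by $\mathcal{F}_h[{\bm c}(R^{n+1})]-\mathcal{F}_h[{\bm c}(R^n)]$ via the convexity inequality $x_1(\ln x_1-1)-x_0(\ln x_0-1)\le \ln x_1\,(x_1-x_0)$ applied to each species. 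Your primary route instead exploits the variational characterization from Theorem~\ref{Positivity: stage 1}: since $J_n$ is strictly convex on the convex set $\mathcal{V}^n$ and $R^{n+1}$ is a critical point, it is the global minimizer, and comparing $J_n(R^{n+1})\le J_n(R^n)$ (with the correctly checked admissibility $R^n\in\mathcal{V}^n$) reduces everything to the scalar inequality $t-(t+1)\ln(t+1)\le 0$. Both arguments are sound; the minimization route is arguably cleaner and makes the energy decay a one-line consequence of the solver's optimization structure, while the paper's testing argument is the one that generalizes more directly when the solution is characterized only as a root of the nonlinear system rather than as a minimizer (e.g.\ for the multi-reaction scheme~\eqref{scheme_Mul_R}, where the full minimization theory is deferred), and it yields the slightly sharper pointwise bound $-a\ln(\tfrac{a}{b}+1)$ that you correctly identify. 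Your observation about re-verifying admissibility when chaining reaction substeps is a fair caveat, though it does not arise here because the scheme re-centers $R^n=0$ at every time step.
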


\begin{proof} 
Multiplying~\eqref{scheme_R} by $R^{n+1}_{i,j,k} - R^n_{i,j,k}$ at each mesh point, and taking a discrete summation over $\Omega$ yields
\begin{eqnarray} 
\begin{aligned} 
  & 
  \ln \left( \frac{R^{n+1}_{i,j,k} - R^n_{i,j,k}}{ \eta( {\bm c}_{i,j,k} (R^n_{i,j,k}) ) \dt} + 1 \right)  \cdot  
    ( R^{n+1}_{i,j,k}  - R^n_{i,j,k} )  
    +  \sum_{m=1}^N \sigma_m \mu_m (R^{n+1}_{i,j,k}) \cdot ( R^{n+1}_{i,j,k} - R^n_{i,j,k} ) = 0,
\\
  &
   \mbox{so that} \quad 
   \Big\langle \ln \left( \frac{R^{n+1} - R^n}{ \eta( {\bm c} (R^n) ) \dt} + 1 \right)  , 
    R^{n+1}  - R^n \Big\rangle 
    + \Big\langle \sum_{m=1}^N \sigma_m \mu_m (R^{n+1}), R^{n+1} - R^n \Big\rangle = 0, 
\end{aligned} 
  \label{Field-energy-1} 
\end{eqnarray}
in which we have recalled the definition for the discrete inner product, $\langle f , g \rangle = h^3 \sum_{i,j,k=0}^{N_0 -1} f_{i,j,k} g_{i,j,k}$, 
while the second equation comes from a discrete summation of the first equality. And also, $\mu_m =  \frac{\delta \mathcal{F}}{\delta c_m}$ stands for the chemical potential of $m-$th species, as the notation consistent with~\eqref{notation-mu-1}.
The non-negative feature of the first inner product term is straightforward: 
\begin{equation}\label{dissipation-1}
  \Big\langle \ln \Big( \frac{R^{n+1} - R^n}{ \eta( {\bm c} (R^{n}) ) \dt} + 1 \Big) , 
    R^{n+1}  - R^n \Big\rangle \ge 0 , 
\end{equation}    
which comes from the fact that   
   $\ln ( \frac{a - b}{\kappa} + 1 ) \geq 0$ if $a -b \ge 0$ and $\ln ( \frac{a - b}{\kappa} + 1)  \le 0$ if $a - b \le 0$ for $\forall a, b$ and $\forall \kappa > 0$.
On the other hand, by the convexity inequality, $x_1 (\ln x_1 - 1) - x_0 (\ln x_0 - 1) = \int_{x_0}^{x_1} (\ln x ) dx \leq  \ln x_1  (x_1 -x_0)$, we have
\begin{equation*}
  \begin{aligned}
& - \alpha_m ( \ln (c_m^0 - \alpha_m R^{n+1}) + U_m) \cdot ( R^{n+1} - R^n )  \geq (c_m^0 - \alpha_m R^{n+1})\left( \ln (c_m^0 - \alpha_m R^{n+1})  - 1 + U_m \right)  \\
& \qquad \qquad \qquad  \qquad  \qquad  \qquad \qquad  \quad  \quad -  (c_m^0 - \alpha_m R^{n})\left( \ln (c_m^0 - \alpha_m R^{n})  - 1 + U_m \right), \quad m = 1, \ldots r , \\
& ~~ \beta_i ( \ln (c_m^0 + \beta_m R^{n+1}) + U_m ) \cdot ( R^{n+1} - R^n )  \geq (c_m^0 + \beta_m R^{n+1})\left( \ln (c_m^0 + \beta_m R^{n+1})  - 1 + U_m \right)  \\
& \qquad \qquad \qquad  \qquad  \qquad  \qquad \qquad  \quad  \quad -  (c_m^0 + \beta_m R^{n})\left( \ln (c_m^0 + \beta_m R^{n})  - 1 + U_m \right), \quad m = r+1, \ldots N . 
  \end{aligned}
\end{equation*}
at each mesh point (here we omit the index $(i,j,k)$ for simplicity).
In turn, a discrete summation of the above estimates leads to
\begin{equation}\label{energy_1}
  \Big\langle \sum_{m=1}^N \sigma_m \mu_m (R^{n+1}), R^{n+1} - R^n \Big\rangle 
  \geq \mathcal{F}_h [{\bm c}(R^{n+1})] - \mathcal{F}_h [{\bm c}(R^{n})] . 
\end{equation}
Again, the notation of discrete inner product, $\langle f , g \rangle = h^3 \sum_{i,j,k=0}^{N_0 -1} f_{i,j,k} g_{i,j,k}$, as well as the definition for the discrete energy, $\mathcal{F}_h ({\bm c})  = \langle {\cal F} ({\bm c}), {\bf 1} \rangle$ (given by~\eqref{splitting-R}), have been recalled.In turn, a substitution of~\eqref{dissipation-1} and \eqref{energy_1} into~\eqref{Field-energy-1} leads to~\eqref{Field-energy-0}, which proves unconditional energy stability. 

\end{proof}

For a general reversible reaction network with $M$ reaction and $N$ reaction, we can construct a similar numerical scheme, given by
\begin{equation}\label{scheme_Mul_R}
 \ln \left(  \frac{R_l^{n+1} - R_l^n}{ \eta_l({\bm c} ({\bm R}^n)) \dt} + 1 \right) = - \sum_{m=1}^N \sigma_m^l \mu_m^{n+1}, \quad l = 1, 2, \ldots M,
\end{equation}
where $\mu_m^{n+1}$ is the chemical potential of $m-$th species for given ${\bm R}^{n+1}$.
The corresponding optimization problem can be defined by
\begin{equation}\label{scheme_R_op}
  \begin{aligned}
&  {\bm R}^{n+1} = \text{argmin}_{{\bm R} \in \mathcal{V}^n} J (\bm{R})   \\
 & J({\bm R}) =  \sum_{l=1}^M \left( (R_l - R^n_l + \eta_l ({\bm c} (R^n_l) ) \dt ) \ln \left(  \frac{R_l - R^n_l}{ \eta_l ( {\bm c} ({\bm R}^n) \dt} + 1 \right) - R_l \right)  + \mathcal{F} [ {\bm c} ({\bm R})] , 
  \end{aligned}
\end{equation}
where ${\bm c}({\bm R}) = {\bm c}_0 + {\bm \sigma} {\bm R}$, and
\begin{equation}
\mathcal{V}^n = \{  {\bm R} \in \mathbb{R}^M ~|~ {\bm c} ({\bm R}) \in \mathbb{R}^N_{+}, \quad R_l - R^n_l + \eta_l ({\bm c} ({\bm R}^n) ) \dt > 0 \}.
\end{equation}
It is easy to show that $J(\bm{R})$ is strictly convex over $\mathcal{V}^n$.
Following the same arguments, it would be possible to establish the positivity-preserving property, unique solvability, and energy stability when ${\rm rank} \, ( {\bm \sigma} )= M$ and $M < N$.  However, the technical details will be much more complicated than the single reaction case. In this paper, we will show that the numerical scheme (\ref{scheme_Mul_R}) works well practically for multiple reactions case through a numerical example. The theoretical justification for numerical scheme (\ref{scheme_Mul_R}) will be explored in the future works.

\begin{remark}
The numerical scheme (\ref{scheme_R_op}) is similar to the mirror descent in optimization. For an optimization problem $\min_{{\bm R}}\mathcal{F}({\bm R})$, the implicit mirror descent can be formulated as
\begin{equation}
R^{n+1} = \mathop{\arg\min}_{{\bm R} \in \mathcal{V}} \frac{1}{\eta} D_{\varphi} ({\bm R}^n, {\bm R}) + \mathcal{F}({\bm R}),
\end{equation}
where $\mathcal{V}$ is the admissible set, $\eta$ is the step-size, and $D_{\varphi} ({\bm R}, {\bm R}^n)$ is the Bregman divergence induced by $\varphi$
\begin{equation}
  D_{\varphi} (\z, \x) = \varphi(\x) - \varphi(\z) - \langle \nabla \varphi(\z), (\x - \z) \rangle.
\end{equation}
The Bregman $D_{\varphi} ({\bm R}, {\bm R}^n)$ measures the distance between ${\bm R}$ and ${\bm R}^n$. The classical implicit Euler method corresponds to the case that $\varphi(\x) = \frac{1}{2} |\x|^2$.
Recall the definition of $J({\bm R})$ in (\ref{scheme_R_op}), which can be reformulated as
\begin{equation}
J({\bm R}) = \frac{1}{\Delta t}\Psi \left( {\bm R}^*, \frac{{\bm R} - {\bm R}^n}{\Delta t} \right) + \mathcal{F}[{\bm c}({\bm R})],
\end{equation}
where $\Psi \left( {\bm R}^*, \frac{{\bm R} - {\bm R}^n}{\Delta t} \right)$ corresponds to a Bregman divergence that measures the distance between ${\bm R}$ and ${\bm R}^n$. Notice that
\begin{equation}
\frac{1}{\Delta t} \frac{\delta \Psi \left( {\bm R}^*, \frac{{\bm R} - {\bm R}^n}{\Delta t} \right) }{\delta {\bm R}} = {\bm \Gamma} ({\bm R}^*, \frac{{\bm R} - {\bm R}^{n}}{\Delta t}) . 
\end{equation}
Under a certain limit, we have
\begin{equation}
\frac{\delta \Psi}{\delta \dot{{\bm R}}} = {\bm \Gamma} ({\bm R}, \dot{{\bm R}}),
\end{equation}
so the dynamics for the reaction kinetics can be reformulated as
\begin{equation}
\frac{\delta \Psi({\bm R}, \dot{\bm R})}{\delta \dot{\bm R}} = - \frac{\delta \mathcal{F}}{\delta {\bm R}}.
\end{equation}
One can view the left-hand side as generalized Onsager's maximum dissipation principle and $\Psi({\bm R}, \dot{\bm R})$ is the pseudo-potential of dissipative forces \cite{mielke2015rate}.
For given energy dissipation $\triangle(R, \dot{R})$, one way to define $\Psi(R, \dot{R})$ is given by~\cite{mielke2015rate} 
\begin{equation}
  \Psi({\bm R}, \dot{\bm R}) = \int_{0}^1   \frac{1}{\theta} \triangle({\bm R}, \theta \dot{\bm R}) \dd \theta,
\end{equation}
which implies that 
\begin{equation}
\left( \frac{\delta \Psi}{\delta \dot{\bm R}}, \dot{\bm R} \right) =   \int_{0}^1 \left(  \frac{\delta \triangle ({\bm R}, \theta \dot{\bm R}) }{\delta \dot{\bm R}}, \dot{\bm R} \right)\dd \theta  = \triangle ({\bm R}, \dot{\bm R}) - \triangle({\bm R}, 0) =  \triangle ({\bm R}, \dot{\bm R}) . 
\end{equation}
In general, an explicit form of $\Psi({\bm R}, \dot{\bm R})$ may not be available.
\end{remark}


\begin{remark} 
The unconditional energy stability of the proposed scheme~\eqref{scheme_R} follows the idea from the convex-concave decomposition of the energy, an idea popularized in Eyre's work~\cite{eyre98}. The method has been applied to the phase field crystal (PFC) equation~\cite{guan16a, wise2009energy};  epitaxial thin film growth models~\cite{chen12, wang10a}; non-local gradient model~\cite{guan14a}; the Cahn-Hilliard model coupled with fluid flow~\cite{chen16, liuY17}, etc. Second-order accurate energy stable schemes have also been reported in recent years, based on either the Crank-Nicolson or BDF2 approach~\cite{baskaran13a, baskaran13b, diegel17, diegel16, guan14b, guo16, hu09, shen12, yan18}, etc. In fact, the discrete energy functional, introduced in~\eqref{splitting-R}, is convex in terms of ${\bm c}$, and no concave term is involved. As a result, all the logarithmic terms are implicitly treated, for the sake of both the energy stability and positivity-preserving property. The implicit and singular nature of these terms prevents the numerical solution reaching the limiting values, as demonstrated in the analysis. Meanwhile, the denominator term $\eta ({\bm c} (R))$, which appears in $\ln \Big( \frac{R_r}{\eta ( {\bm c} (R) ) } +1 \Big)$, is treated explicitly, which plays a similar role as a mobility. Such an explicit treatment ensures the convexity of the corresponding numerical approximation associated with the temporal discretization, which makes the theoretical analysis pass through. 
\end{remark}

\subsection{Numerical scheme for the diffusion stage} 

It is noticed that for the cases that we consider, the equations for each species are fully decoupled at the diffusion stage ${\bm c}_t = \mathcal{B} {\bm c}$. Henceforth, we only need to construct a positivity-preserving, energy stable scheme for a single diffusion equation
\begin{equation}
\rho_t = \nabla \cdot  (D(\rho) \nabla \rho). 
\end{equation}

There have been extensive existing works of positivity-preserving and energy stable scheme for generalized diffusion equations \cite{liu2020positivity, shen2020PNP, gu2020bound, carrillo2018lagrangian, liu2019lagrangian}.  
For the diffusion equation that considered in this paper, we can adopt the classical semi-implicit scheme
\begin{equation}\label{semi_imp}
  \frac{\rho^{n+1} - \rho^n}{\dt} = \nabla_h \cdot \left( {\cal A}_h [D (\rho^n)]  \nabla_h \rho^{n+1}  \right),
\end{equation}
where $\nabla_h$ and $\nabla_h \cdot$ stand for the discrete gradient and the discrete divergence, respectively. The spatially averaged field ${\cal A}_h [D (\rho^n)]$, which is introduced to obtain the value of $D(\rho^n)$ at the staggered mesh points, is defined as 
\begin{equation} 
\begin{aligned} 
  & 
  ( {\cal A}_h [D (\rho^n)] )_{i+1/2, j,k} = \frac12 ( ( D (\rho^n) )_{i, j,k} + ( D (\rho^n) )_{i+1, j,k}  ),  
\\
  &
  ( {\cal A}_h [D (\rho^n)] )_{i, j+1/2,k} = \frac12 ( ( D (\rho^n) )_{i, j,k} + ( D (\rho^n) )_{i, j+1,k}  ),  
\\
  &
  ( {\cal A}_h [D (\rho^n)] )_{i, j,k+1/2} = \frac12 ( ( D (\rho^n) )_{i, j,k} + ( D (\rho^n) )_{i, j,k+1}  ) .    
\end{aligned} 
\label{average-1} 
\end{equation}    
The key point of such an average operator is associated with the feature that ${\cal A}_h [D (\rho^n)] >0$ at all staggered mesh points, provided that $D (\rho^n)_{i,j,k} > 0$. For the standard semi-implicit scheme (\ref{semi_imp}), its positivity-preserving property could be theoretically developed, as well as the energy stability, where the logarithmic function in the free energy plays an essential role. 



Indeed, the positivity-preserving property for this numerical scheme is obvious, due to the discrete maximum principle. 
\begin{theorem}  \label{Heat-positivity} 
  Given $\rho^n$, with $\rho_{i,j,k}^n > 0$, $0 \le i,j,k \le N_0$, there exists a unique solution $\rho^{n+1}$ for the numerical scheme~\eqref{semi_imp}, with the discrete periodic boundary condition, with $\rho_{i,j,k}^{n+1} > 0$, $0 \le i,j,k \le N_0$.   
\end{theorem}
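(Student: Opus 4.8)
The plan is to treat the scheme~\eqref{semi_imp} as a linear system for the unknown $\rho^{n+1}$, establishing first its unique solvability by a positive-definiteness argument and then the strict positivity of the solution by a discrete maximum principle.

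First I would rewrite~\eqref{semi_imp} in the operator form
\begin{equation}
  \big( I - \dt \, \nabla_h \cdot ( {\cal A}_h [D (\rho^n)] \nabla_h ) \big) \rho^{n+1} = \rho^n ,
\end{equation}
acting on the finite-dimensional space of discrete periodic grid functions on $\Omega$. Since $D(\rho^n)_{i,j,k} > 0$ at every mesh point, the averaged coefficients $( {\cal A}_h [D (\rho^n)] )$ defined in~\eqref{average-1} are strictly positive at all staggered points; consequently, discrete summation by parts (valid under the periodic boundary condition) yields
\begin{equation}
  - \big\langle \nabla_h \cdot ( {\cal A}_h [D (\rho^n)] \nabla_h \rho ) , \rho \big\rangle = \big\langle {\cal A}_h [D (\rho^n)] \, \nabla_h \rho , \nabla_h \rho \big\rangle \ge 0
\end{equation}
for every discrete periodic $\rho$. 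Hence the symmetric bilinear form associated with the left-hand operator, namely $\langle \rho , \rho \rangle + \dt \langle {\cal A}_h [D (\rho^n)] \nabla_h \rho , \nabla_h \rho \rangle$, is positive definite, so the operator is invertible and a unique $\rho^{n+1}$ exists. Equivalently, the coefficient matrix is a strictly diagonally dominant M-matrix: its diagonal entries equal $1 + \dt \cdot(\text{positive})$ while its off-diagonal entries are non-positive.

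Next, for the positivity-preserving property I would argue by the discrete maximum principle. Let $(i_0, j_0, k_0)$ be a mesh point at which $\rho^{n+1}$ attains its global minimum over $\Omega$. Evaluating~\eqref{semi_imp} there, the discrete diffusion term $[\nabla_h \cdot ( {\cal A}_h [D (\rho^n)] \nabla_h \rho^{n+1} )]_{i_0,j_0,k_0}$ is a sum of the six neighbor differences $\rho^{n+1}_{\mathrm{neighbor}} - \rho^{n+1}_{i_0,j_0,k_0}$ weighted by the strictly positive factors $( {\cal A}_h [D (\rho^n)] )_{\cdots} / h^2$; because $\rho^{n+1}_{i_0,j_0,k_0}$ is the minimum, each difference is $\ge 0$, so this term is $\ge 0$. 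Therefore
\begin{equation}
  \rho^{n+1}_{i_0,j_0,k_0} = \rho^n_{i_0,j_0,k_0} + \dt \, [\nabla_h \cdot ( {\cal A}_h [D (\rho^n)] \nabla_h \rho^{n+1} )]_{i_0,j_0,k_0} \ge \rho^n_{i_0,j_0,k_0} > 0 ,
\end{equation}
and since this is the minimal value, $\rho^{n+1}_{i,j,k} > 0$ at every mesh point.

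I do not expect a genuine obstacle here: the scheme is linear, the diffusion operator is minus a positive semidefinite discrete Laplacian with strictly positive variable coefficients, and the added identity makes the whole operator strictly positive definite and strictly diagonally dominant. The only points needing a little care are (i) the periodic discrete Laplacian is itself singular — constants lie in its kernel — so invertibility genuinely relies on the identity shift, and (ii) one should explicitly record that $D(\rho^n) > 0$ transfers to strict positivity of ${\cal A}_h[D(\rho^n)]$ at the staggered points, which is precisely the property highlighted after~\eqref{average-1}. Everything else is the standard M-matrix / discrete-maximum-principle bookkeeping.
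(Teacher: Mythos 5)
Your proof is correct and follows exactly the route the paper indicates: the paper offers no written proof of this theorem, simply remarking that the result is ``obvious, due to the discrete maximum principle,'' and your argument is a sound and complete elaboration of precisely that — unique solvability from positive definiteness of $I - \dt\,\nabla_h\cdot({\cal A}_h[D(\rho^n)]\nabla_h)$ under summation by parts, and strict positivity by evaluating the scheme at a global minimum point where the weighted neighbor differences are nonnegative. Both of the cautionary points you flag (the identity shift curing the kernel of the periodic discrete Laplacian, and the transfer of positivity from $D(\rho^n)$ to ${\cal A}_h[D(\rho^n)]$) are exactly the right details to record.
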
 

To prove the energy stability, we first introduce the discrete free energy for any discrete grid function $\rho$:
\begin{equation} 
  \mathcal{F}_h (\rho) := \langle \rho \ln \rho + C \rho ,  {\bf 1} \rangle, 	\label{energy-phase-discrete-1} 
\end{equation} 
where $C$ is an arbitrary constant, $\langle f , g \rangle = h^3 \sum_{i,j,k=0}^{N_0-1} f_{i,j,k} g_{i,j,k}$ denotes the discrete $L^2$ inner product. Thanks to the positivity-preserving property and unique solvability for the numerical scheme~\eqref{semi_imp}, we are able to establish an unconditional energy stability. 
  
	\begin{theorem}
	\label{Heat-energy stability} 
For the numerical solution~\eqref{semi_imp},  with the discrete periodic boundary condition, we have $\langle \rho^{n+1}, {\bf 1} \rangle = \langle \rho^{n}, {\bf 1} \rangle$ and
	\begin{equation} 
\mathcal{F}_h (\rho^{n+1}) \le \mathcal{F}_h (\rho^n),   \label{heat-energy-0} 
	\end{equation} 
so that $\mathcal{F}_h (\rho^n) \le \mathcal{F}_h (\rho^0)$, an initial constant. 
	\end{theorem}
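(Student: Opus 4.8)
The plan is to test the scheme against the discrete chemical potential and exploit the convexity of the logarithmic energy density together with discrete summation by parts, in close analogy with the continuous identity $\frac{\dd}{\dd t}\int\omega(\rho)\,\dd\x = -\int\mathcal{M}(\rho)|\nabla\mu|^2\,\dd\x$. First I would record mass conservation: summing~\eqref{semi_imp} over all grid points and using that the discrete divergence of any periodic grid vector field sums to zero (a telescoping / discrete Gauss argument under the periodic boundary condition), one gets $\langle \rho^{n+1}-\rho^n,{\bf 1}\rangle = \Delta t\,\langle \nabla_h\cdot({\cal A}_h[D(\rho^n)]\nabla_h\rho^{n+1}),{\bf 1}\rangle = 0$, hence $\langle\rho^{n+1},{\bf 1}\rangle=\langle\rho^n,{\bf 1}\rangle$. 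Note that Theorem~\ref{Heat-positivity} already guarantees $\rho^{n+1}_{i,j,k}>0$ at every grid point, so $\ln\rho^{n+1}$ is well defined and all the manipulations below make sense.

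Next I would take the discrete $L^2$ inner product of~\eqref{semi_imp} with $\mu^{n+1}:=\ln\rho^{n+1}+C$ and apply discrete summation by parts (valid for periodic data) on the right-hand side, obtaining
\begin{equation*}
\langle \rho^{n+1}-\rho^n,\,\ln\rho^{n+1}+C\rangle = -\Delta t\,\langle {\cal A}_h[D(\rho^n)]\nabla_h\rho^{n+1},\,\nabla_h(\ln\rho^{n+1})\rangle .
\end{equation*}
The right-hand side is $\le 0$: at every staggered mesh point the factor ${\cal A}_h[D(\rho^n)]$ is strictly positive by~\eqref{average-1} (since $D(\rho^n)_{i,j,k}>0$), while each component of $\nabla_h\rho^{n+1}\cdot\nabla_h(\ln\rho^{n+1})$ has, for instance in the $x$-direction, the form $h^{-2}(\rho^{n+1}_{i+1,j,k}-\rho^{n+1}_{i,j,k})(\ln\rho^{n+1}_{i+1,j,k}-\ln\rho^{n+1}_{i,j,k})\ge 0$, because $t\mapsto\ln t$ is increasing, i.e. $(\ln a-\ln b)(a-b)\ge 0$ for all $a,b>0$. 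This term is precisely the discrete dissipation.

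For the left-hand side I would use convexity of the scalar function $h(x)=x\ln x$ on $(0,\infty)$ (with $h''(x)=1/x>0$): the tangent-line inequality at $\rho^{n+1}$ gives, pointwise, $h(\rho^{n+1})-h(\rho^n)\le h'(\rho^{n+1})(\rho^{n+1}-\rho^n)=(\ln\rho^{n+1}+1)(\rho^{n+1}-\rho^n)$, and the linear term $C\rho$ contributes exactly $C(\rho^{n+1}-\rho^n)$. Summing over the grid and recalling $\mathcal{F}_h(\rho)=\langle\rho\ln\rho+C\rho,{\bf 1}\rangle$,
\begin{equation*}
\mathcal{F}_h(\rho^{n+1})-\mathcal{F}_h(\rho^n)\le \langle\ln\rho^{n+1}+C,\,\rho^{n+1}-\rho^n\rangle + \langle{\bf 1},\,\rho^{n+1}-\rho^n\rangle ,
\end{equation*}
and the last term vanishes by the mass conservation from step one. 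Combining this with the identity and the sign from step two yields $\mathcal{F}_h(\rho^{n+1})-\mathcal{F}_h(\rho^n)\le -\Delta t\,\langle {\cal A}_h[D(\rho^n)]\nabla_h\rho^{n+1},\nabla_h(\ln\rho^{n+1})\rangle\le 0$, which is~\eqref{heat-energy-0}; a trivial induction on $n$ then gives $\mathcal{F}_h(\rho^n)\le\mathcal{F}_h(\rho^0)$.

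The routine-but-essential bookkeeping is the discrete summation-by-parts formula on the staggered (MAC-type) grid and the consistent placement of the averaged coefficient ${\cal A}_h[D(\rho^n)]$ at the half-integer points; none of this is deep, but it must be arranged carefully so that the ``integration by parts'' picks up no boundary terms under periodicity. The only genuine subtlety is that the argument of $\ln$ must remain strictly positive, which is exactly why Theorem~\ref{Heat-positivity} is invoked before anything else; after that, the whole estimate rests on just two elementary facts about $\ln$ — convexity of $x\ln x$ and monotonicity of $\ln$.
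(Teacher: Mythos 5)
Your proposal is correct and follows essentially the same route as the paper: mass conservation by testing with ${\bf 1}$, testing the scheme with $\mu^{n+1}=\ln\rho^{n+1}+C$ and using summation by parts plus the monotonicity of $\ln$ (with ${\cal A}_h[D(\rho^n)]>0$) to get the sign, and then the convexity of $\rho\ln\rho+C\rho$ to pass to the energy difference. Your explicit use of mass conservation to absorb the extra ``$+1$'' from differentiating $x\ln x$ is a slightly cleaner piece of bookkeeping than the paper's, but it is not a different argument.
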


\begin{proof} 

The conservation property can be proved by taking a discrete inner product with~\eqref{semi_imp} by ${\bf 1}$, which results in
\begin{equation}
\langle \rho^{n+1} - \rho^n, {\bf 1} \rangle = 0,
\end{equation}
due to the discrete periodic boundary condition. Moreover, taking a discrete inner product with~\eqref{semi_imp} by $\mu^{n+1} = \ln \rho^{n+1} + C$ yields  
\begin{eqnarray} 
\begin{aligned} 
  \frac{1}{\tau} \langle \mu^{n+1} + C, \rho^{n+1} - \rho^n \rangle  
  =  & \langle \mu^{n+1} , 
  \nabla_h \cdot \left( {\cal A}_h[D(\rho^{n})] \nabla_h \rho^{n+1}  \right)  \rangle  
\\
  =& 
  - \langle \nabla_h (\ln \rho^{n+1}), {\cal A}_h[D(\rho^n)] \nabla_h \rho^{n+1} \rangle_{*} 
  \le 0 \label{heat-energy-1}, 
\end{aligned} 
\end{eqnarray}
where the last step follows the inequality $(a - b) \ln (a - b) \geq 0$, combined with the fact that ${\cal A}_h[D(\rho^n)]  > 0$. Here $\langle \cdot, \cdot \rangle_{*}$ denotes the discrete inner product defined on the staggered mesh points.

Due to convexity of the discrete energy $\mathcal{F}_h (\rho) = \langle \rho \ln \rho + C \rho, {\bf 1} \rangle$, we have
\begin{equation} 
\mathcal{F}_h (\rho^{n+1}) -  \mathcal{F}_h (\rho^{n}) \leq \langle \mu^{n+1}, \rho^{n+1} - \rho^n \rangle \leq 0 . \label{heat-energy-2}
\end{equation}
In turn, a substitution of~\eqref{heat-energy-2} into \eqref{heat-energy-1} leads to~\eqref{heat-energy-0}, so that the unconditional energy stability is proved. 
\end{proof} 

\begin{remark}
  Such an energy stability analysis is available when the free energy is convex with respect to $\rho$, so that $\mu(\rho)$ is increasing in $\rho$. The discrete energy-dissipation law corresponding (\ref{semi_imp}) becomes 
  \begin{equation}
    \frac{\mathcal{F}^{n+1} - \mathcal{F}^n}{\dt} = -  \left \langle  {\cal A}_h \left[ \frac{\widebreve{\cal M}^n}{\rho^n}  \right] \nabla_h \rho^{n+1}, \nabla_h \mu^{n+1}   \right\rangle \leq 0 . 
    \end{equation}
\end{remark}



\begin{remark} 
Although the stability for the free energy (\ref{energy-phase-discrete-1}) can be proved for the semi-implicit scheme, the algorithm is not based on the variational structure associated with this energy. 
In fact, the semi-implicit scheme corresponds to the minimization problem
\begin{equation}
  \rho = \mathop{\arg\min}_{\rho} \frac{1}{2 \dt} \| \rho - \rho^n \|_2^2 +     \int D(\rho^n) \nabla \rho \cdot \nabla \rho ~ \dd \x . 
\end{equation}
A variational structure preserving scheme for a diffusion equation with the free energy (\ref{energy-phase-discrete-1}) can be developed by using the Lagrangian method \cite{carrillo2018lagrangian, liu2019lagrangian} or the numerical methods for Wasserstein gradient flows \cite{benamou2016augmented}.
\end{remark}

\begin{remark}
Recently, there have been various works devoted to designing a positivity-preserving and energy stable scheme for generalized diffusion systems, such as Poisson-Nernst-Planck equations \cite{liu2020positivity, shen2020PNP} and the porous medium equations \cite{gu2020bound}. 
The basic idea is to use the numerical scheme
\begin{equation}\label{D_scheme2}
\frac{\rho^{n+1} - \rho^n}{\dt}  = \nabla_h \cdot 
\left( {\cal A}_h [\widebreve{\cal M}^n] \nabla_h \mu^{n+1}  \right),  
\end{equation}
where $\nabla_h$ and $\nabla_h \cdot$ are the discrete gradient and the discrete divergence, 
${\cal A}_h [\widebreve{\cal M}^n]$ is the spatial average of the mobility at the staggered mesh points, and $\mu^{n+1}$ is the chemical potential taken in the $n+1$-th time step. 
By taking the mobility explicitly, the proof of unique solvability, positivity-preserving, as well as energy stability, follows through the convexity analysis for the logarithmic terms in the free energy. Moreover, if $\mathcal{F}_h$ is a convex energy, the numerical solution (\ref{D_scheme2}) satisfies the following discrete energy-dissipation law 
\begin{equation}
\frac{\mathcal{F}^{n+1}_h - \mathcal{F}^n_h}{\dt} = -  \langle {\cal A}_h [\widebreve{\cal M}^n] \nabla_h \mu^{n+1}, \nabla_h \mu^{n+1}   \rangle \leq 0 . 
\end{equation}
An interesting fact we have discovered here is that, if the physical energy is convex, such as the porous medium type diffusion with $\rho \ln \rho$ as the physical energy, it is not necessary to use the nonlinear scheme (\ref{D_scheme2}). Instead, a standard semi-implicit linear scheme can achieve a similar theoretic goal. The idea of energy-dissipation law based operator splitting approach can be applied far beyond reaction-diffusion systems.
\end{remark}



\subsection{The operator splitting scheme for the reaction-diffusion system} 
Based on the previous discussion for the reaction-diffusion system~\eqref{equation: reaction-diffusion}, the following operator splitting scheme is proposed. For simplicity of presentation, only the case of a single reversible chemical reaction is considered, and an extension to the case of multiple reactions is expected to be straightforward. 

Given ${\bm c}^n$, with ${\bm c}_{i,j,k}^n \in \mathbb{R}^N_{+}$ at all mesh points. We update ${\bm c}^{n+1}$, via the following two stages. 

\noindent 
{\bf Stage 1.} \, First, we set ${\bm c}^{n, 0} = {\bm c}^n$, and apply the following scheme (corresponding to~\eqref{scheme_R}) at each mesh point:
\begin{eqnarray} 
  && 
\ln \left(  \frac{R^{n+1} - R^n}{\eta( {\bm c}^n)) \Delta t} + 1 \right) = 
- \sum_{m=1}^N \sigma_i \mu_m (R^{n+1}),  \quad R^n \equiv 0,   \label{splitting-Field-A-2}   
\end{eqnarray}
where $\mu_{m}(R^{n+1}) =  {\bm c}^{n, 0} + {\bm \sigma} R^{n+1}$ is the chemical potential of $m$-th species for given $R^{n+1}$. Here we omit the index $(i,j,k)$ for simplicity. 
By Theorem~\ref{Positivity: stage 1}, there exists a unique solution $R^{n+1}_{i,j,k}$ at each mesh point such that $c_{i,j,k} + {\bm \sigma} R^{n+1}_{i,j,k} \in \mathbb{R}^N_{+}$. We denote 
\begin{equation} 
   {\bm c}^{n+1,*}_{i,j,k} := {\bm c}^n_{i,j,k} + {\bm \sigma} R^{n+1}_{i,j,k},
   \label{splitting-Field-A-3}   
\end{equation}    
and we have the  the following energy dissipation 
    \begin{equation} 
  {\cal F}_h ( {\bm c}^{n+1,*}) \le {\cal F}_h ( {\bm c}^n ) . 
     \label{splitting-Field-A-4}     
  \end{equation} 
according to Theorem~\ref{Field-energy stability}, where $\mathcal{F}_h ({\bm c}): = \langle {\cal F} ({\bm c}), {\bf 1} \rangle$ is the discrete energy with ${\cal F} ({\bm c})$ is given by~\eqref{free_energy_U}.

\noindent 
{\bf Stage 2.} \,The intermediate variables ${\bm c}^{n+1,*}$ have been proved to be positive at each mesh point. Next, we update ${\bm c}^{n+1}$ by the semi-implicit scheme (\ref{semi_imp})
\begin{equation} 
  \frac{{\bm c}_m^{n+1} - {\bm c}_m^{n+1,*}}{\dt} = \nabla_h \cdot \left( {\cal A}_h [ D_m ({\bm c}_m^n) ] \nabla_h {\bm c}_m^{n+1} \right) , \, \, \,  
 1 \le m \le N  .  
    \label{splitting-Field-B} 
\end{equation} 
With an application of Theorem~~\ref{Heat-positivity} and \ref{Heat-energy stability}, both the positivity and energy stability are preserved:
\begin{eqnarray} 
  &&  
  {\bm c}^{n+1}_{i,j,k} \in \mathbb{R}_{+}^N, \quad {\cal F}_h ( {\bm c}^{n+1})  
    \le {\cal F}_h ( {\bm c}^{n+1,*}). 
     \label{splitting-Field-B-6}     
\end{eqnarray} 
A combination of~\eqref{splitting-Field-A-4} and \eqref{splitting-Field-B-6} results in 
\begin{eqnarray} 
    {\cal F}_h ( {\bm c}^{n+1})  \le {\cal F}_h ( {\bm c}^n)  . 
     \label{splitting-Field-B-7}     
\end{eqnarray} 

Therefore, we arrive at the following theoretical result for the operator splitting scheme.  
\begin{theorem}  \label{Field-operator splitting} 
  Given ${\bm c}^n$, with ${\bm c}^n_{i,j,k} \in \mathbb{R}_{+}^N$, there exists a unique solution ${\bm c}^{n+1}$, with discrete periodic boundary condition, for the operator splitting numerical scheme~\eqref{splitting-Field-A-2}-\eqref{splitting-Field-A-3}, combined with  \eqref{splitting-Field-B}. The point-wise positivity is ensured, i.e., ${\bm c}^{n+1}_{i,j,k} \in \mathbb{R}_{+}^N$ at all mesh points. In addition, we have the energy dissipation estimate: ${\cal F}_h ( {\bm c}^{n+1})	 \le {\cal F}_h ( {\bm c}^n)$, so that ${\cal F}_h ({\bm c}^n) \le {\cal F}_h ({\bm c}^0)$. 
\end{theorem}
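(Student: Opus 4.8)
The plan is to assemble the statement by concatenating, in order, the four results already established for the two stages, and to close with an induction on $n$. No new analysis is needed: every ingredient is contained in Theorems~\ref{Positivity: stage 1}--\ref{Heat-energy stability}, so the work is essentially bookkeeping — verifying that the hypotheses of each stage are inherited correctly from the previous one and that the two stages are phrased in terms of the \emph{same} discrete energy ${\cal F}_h$.

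First I would handle \textbf{Stage 1}. Fix an arbitrary mesh point $(i,j,k)$ and set $R^n_{i,j,k}\equiv 0$, ${\bm c}^{n,0}_{i,j,k}={\bm c}^n_{i,j,k}\in\mathbb{R}^N_{+}$. Since the mobility $\eta({\bm c}^n_{i,j,k})=k_1^-\prod_{m=r+1}^N (c^n_{m,i,j,k})^{\beta_m}>0$ by positivity of ${\bm c}^n$, Theorem~\ref{Positivity: stage 1} applies verbatim at this point and yields a unique $R^{n+1}_{i,j,k}$ in the admissible set $\mathcal{V}^n$, hence ${\bm c}^{n+1,*}_{i,j,k}:={\bm c}^n_{i,j,k}+{\bm\sigma}R^{n+1}_{i,j,k}\in\mathbb{R}^N_{+}$. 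Performing this at every mesh point defines ${\bm c}^{n+1,*}$ with strictly positive components everywhere, and Theorem~\ref{Field-energy stability} (applied with ${\bm c}^0={\bm c}^n$) gives the energy decay~\eqref{splitting-Field-A-4}, ${\cal F}_h({\bm c}^{n+1,*})\le{\cal F}_h({\bm c}^n)$, with ${\cal F}_h$ the discrete version of~\eqref{free_energy_U}.

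Next I would handle \textbf{Stage 2}. Because the diffusion operator is fully decoupled across species, the update~\eqref{splitting-Field-B} is, for each fixed $1\le m\le N$, exactly the single-equation semi-implicit scheme~\eqref{semi_imp} with initial data ${\bm c}^{n+1,*}_m>0$ and coefficient ${\cal A}_h[D_m({\bm c}^n_m)]$, which is strictly positive on the staggered mesh by~\eqref{average-1}. Theorem~\ref{Heat-positivity} then gives a unique ${\bm c}^{n+1}_m$ with ${\bm c}^{n+1}_{m,i,j,k}>0$ at all mesh points, and Theorem~\ref{Heat-energy stability} gives both the conservation $\langle {\bm c}^{n+1}_m,{\bf 1}\rangle=\langle {\bm c}^{n+1,*}_m,{\bf 1}\rangle$ and the per-species energy decay. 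The one point requiring care is the bookkeeping of the additive constant: Theorem~\ref{Heat-energy stability} is stated for $\langle\rho\ln\rho+C\rho,{\bf 1}\rangle$ with arbitrary $C$, so I would take $C=U_m-1$ for the $m$-th species; summing the $N$ per-species inequalities then reproduces precisely ${\cal F}_h({\bm c}^{n+1})\le{\cal F}_h({\bm c}^{n+1,*})$ with the \emph{same} functional ${\cal F}_h$ used in Stage 1, which is~\eqref{splitting-Field-B-6}.

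Finally, chaining~\eqref{splitting-Field-A-4} and~\eqref{splitting-Field-B-6} yields~\eqref{splitting-Field-B-7}, ${\cal F}_h({\bm c}^{n+1})\le{\cal F}_h({\bm c}^n)$; uniqueness of ${\bm c}^{n+1}$ follows from the uniqueness asserted in each stage; and an induction on $n$ starting from ${\bm c}^0$ gives the global bound ${\cal F}_h({\bm c}^n)\le{\cal F}_h({\bm c}^0)$. I do not expect a real obstacle; the only place to stay vigilant is that the reaction stage works with the trajectory variable $R$ and must be pushed back to the concentration variable via ${\bm c}^{n+1,*}={\bm c}^n+{\bm\sigma}R^{n+1}$ before the diffusion stage can consume it, and that the positivity produced at the end of Stage 1 is exactly the hypothesis needed to start Stage 2 — so the two decay estimates compose against a single common energy.
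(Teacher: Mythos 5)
Your proposal is correct and follows essentially the same route as the paper: Stage 1 is Theorem~\ref{Positivity: stage 1} applied point-wise plus Theorem~\ref{Field-energy stability}, Stage 2 is Theorems~\ref{Heat-positivity} and~\ref{Heat-energy stability} applied species-by-species, and the two decay estimates are chained against the common discrete energy. Your extra remark about choosing $C=U_m-1$ so that the Stage-2 energy coincides with ${\cal F}_h$ from~\eqref{free_energy_U} is exactly the bookkeeping the paper leaves implicit.
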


\begin{remark} 
There have been many existing works of operator splitting numerical approximation to nonlinear PDEs, such as \cite{descombes2001convergence, descombes2004operator, zhao2011operator} for reaction-diffusion systems, \cite{Bao2002, besse2002order, Lubich2008, Shen2013, Thalhammer2012} for the nonlinear Schr\"odinger equation, \cite{Badia2013} for the incompressible magnetohydrodynamics system, \cite{BCF2013} for the delay equation, \cite{EO2014a} for the nonlinear evolution equation, \cite{EO2014b} for the Vlasov-type equation, \cite{KV2015} for a generalized Leland's mode, \cite{zhangC18a, zhangC17a} for the ``Good" Boussinesq equation, \cite{Lee2015} for the Allen-Cahn equation,  \cite{Li2017} for the molecular beamer epitaxy (MBE) equation, \cite{Zhao2014} for nonlinear solvation problem, etc. On the other hand, different computational stages correspond to different physical energy for most existing works of operator splitting, so that a combined energy stability estimate becomes very challenging at a theoretical level. In comparison, the same free energy, in the form of ${\cal F}_h ( {\bm c} ) = \langle \mathcal{F}({\bm c}) , {\bf 1} \rangle$, dissipates at both the reaction and the diffusion stages in our proposed scheme, so that the combined operator splitting scheme yields a dissipated energy at each time step, with a careful transformation~\eqref{splitting-Field-A-3} between two splitting stages. In the authors' best knowledge, it is the first time to report a free energy dissipation for a combined operator splitting scheme to a nonlinear PDE with variational structures.  
\end{remark} 

\begin{remark} 
The proposed numerical method is designed for reaction-diffusion equations with variational structure, i.e., the reaction part satisfies the law of mass action with the detailed balance condition. The variational structure is the key point that enables one to develop a positive-preserving and energy stable numerical schemes. The proposed numerical framework is not limited to reaction-diffusion systems, but can be applied to more general dissipative systems with different components of dissipations \cite{knopf2020phase, liu2019energetic, wang2021two}. In the existing literature, extensive higher-order positivity-preserving numerical schemes have been developed for general reaction-diffusion equation \cite{formaggia2011positivity, huang2019positivity,sandu2001positive}, while most of them didn't use the variational structure, so that the energy stability was not theoretically justified. In comparison, it is straightforward to apply our proposed approach to develop high-order schemes for dissipative systems.
\end{remark}

\begin{remark} 
The proposed scheme is very efficient in terms of numerical implementation, for both 2-D and 3-D computations. At the reaction stage, the optimization problem~\eqref{scheme_R_op} corresponds to a convex energy to be optimized, and this fact has greatly facilitated the computational efforts. In fact, a lot of optimization methods have been available for this convex optimization problem. In the current study, we use simple gradient descent with Barzilai-Borwein (BB) step-size and apply a backtracking line search to ensure $R$ is in the admissible set; see the detailed descriptions in Remark~\ref{rem: reaction solver}. In particular, such a numerical solver is point-wise applied over the numerical grid, and there is no coupling between the numerical mesh points, because of the ODE nature of the system~\eqref{scheme_Mul_R} (and \eqref{splitting-Field-A-2}). Therefore, only an order of $O (N_0^d)$ (with $d$ the dimension) calculations are needed in the reaction stage, which corresponds to a computational cost less than a standard Poisson solver.

In terms of the numerical implementation for~\eqref{splitting-Field-B} in the diffusion stage, we observe that it is a linear elliptic equation with given variable coefficients. Many existing numerical solvers, such as conjugate gradient method and preconditioned steepest descent gradient iteration algorithm, could be very efficiently applied. Extensive numerical experiments have demonstrated that, the computational cost of such a numerical solver is at the same level as the Poisson solvers, maybe three to five times of a standard Poisson solver, with a reasonable initial iteration guess. In addition, the availability of FFT-based computational tools has further improved the numerical efficiency. As a result, the proposed numerical algorithm~\eqref{splitting-Field-B} in the diffusion stage has a great advantage in terms of numerical efficiency; it is indeed computationally much more efficient than many other positivity-preserving and energy stable schemes, such as the the scheme in~\cite{liu2020positivity} for the Poisson-Nernst-Planck system and the one in~\cite{gu2020bound} for the porous medium equations.

Since the computational cost at the reaction stage is of order $O (N_0^d)$, even less than a standard Poisson solver, and the computational cost at the diffusion stage is at the same level as the Poisson solvers, the spatial dimension does not pose a great computational challenge to the proposed operator splitting scheme~\eqref{splitting-Field-A-2}-\eqref{splitting-Field-B}. Both 2-D and 3-D computations could be efficiently performed. In this article we focus on the 2-D computational results, and more 3-D modeling and numerical results will be reported in the future works.
\end{remark}

\section{Numerical results} 

In this section, we perform some numerical tests for the proposed numerical scheme to demonstrate the energy stability, as well as the positivity-preserving property. 

\subsection{Reaction kinetic: accuracy test}
We first consider a simple test case, as used in \cite{huang2019positivity}, given by
\begin{equation}\label{test_1}
\begin{cases}
  & \dfrac{\dd c_1}{\dd t} = c_2 - ac_1 , \\
  & \dfrac{\dd c_2}{\dd t} = a c_1 - c_2  ,  
\end{cases}
\end{equation}
where $a > 0$ is a constant. This equation corresponds to a simple reversible chemical reaction $\ce{X_1 <=>[a][1] X_2}$. For a given initial condition $c_i(0) =c_i^0$, the exact solution can be computed as
\begin{equation}
c_1(t) = (1 + \left( \frac{c_1^0}{c_1^{\infty}} -1 \right) \exp(-(a+1)) t) c_1^{\infty}, \quad c_2(t) = c_1^0 + c_2^0 - c_1(t),
\end{equation}
where $c_1^{\infty} = (c_1^0 + c_2^0) / (a+1)$ is the equilibrium concentration of $X_1$. Let $R$ be the reaction trajectory, 
the corresponding energy-dissipation law can be expressed as
\begin{equation}
\frac{\dd}{\dd t} \left( \sum_{i=1}^2 c_i (\ln c_i - 1) + c_1 \ln a + c_2 \ln (1)  \right) = - \dot{R} \ln \left(  \frac{\dot{R}}{c_2} + 1 \right).
\end{equation}

The errors between numerical solution and exact solution at $T = 1$ are displayed in Fig. \ref{Fig1} (a). A clear first-order accuracy is observed. Since the proposed operator splitting scheme only has first-order temporal accuracy, a first-order numerical approximation for the reaction part is adequate for our purpose. Fig. \ref{Fig1} (b) shows the evolution of numerical free energy with respect to time ($\dt = 1/160$), which verifies the energy stability of the scheme.

\begin{minipage}{\textwidth}
  \begin{minipage}[b]{0.4\textwidth}
    \centering
    \begin{tabular}{c|c|c} 
     $\dt$ & Error & Order \\
     \hline
     1/20  & 0.02840 & \\
     1/40  & 0.01377 & 1.04 \\
     1/80  & 6.767e-3 & 1.02\\
     1/160 & 3.353e-3 & 1.01 \\
     1/320 & 1.669e-3 & 1.0066 \\
   \end{tabular}

   \vspace{2em}
  \end{minipage}
  \hspace{1em}
  \begin{minipage}[b]{0.35\textwidth}
    \centering

    \vspace{1em}
    \begin{overpic}[width = \textwidth]{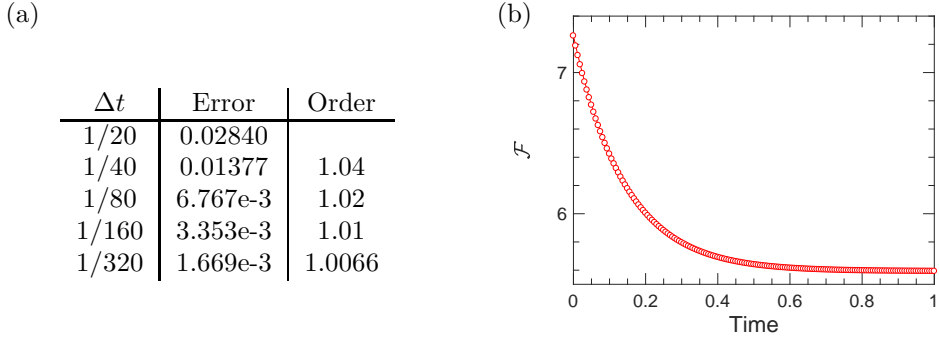}
      \put(-4, 74){(b)}
      \put(-120, 74){(a)}
    \end{overpic}
    \end{minipage}
    \captionof{figure}{(a) Error table for the linear ODE system (\ref{test_1}). (b) Numerical free energy with respect to time ($\dt = 1/160$).}\label{Fig1}
    \vspace{1em}

  \end{minipage}

\subsection{Generalized Michaelis-Menten kinetics}
In this subsection, we test our numerical scheme for a multiple reaction case. Consider a generalized Michaelis-Menten enzyme kinetics with two intermediate states \cite{keener1998mathematical, peller1959multiple}, in which the reactions are given by
\begin{equation}\label{MM_R}
\ce{E + S <=>[k_1^+][k_1^-] ES}, \quad \ce{ES <=>[k_2^+][k_2^-] EP}, \quad \ce{EP <=>[k_3^+][k_3^-] E + P}.
\end{equation}
This is a reaction network with 5 species and 3 reactions, where  ${\rm E}$ is the enzyme that catalyzes the reaction $\ce{S <=> P}$,  ${\rm SE}$ and ${\rm SP}$ are two intermediates. It is often assumed that $k_2^- \ll k_2^+$ and $k_3^- \ll k_3^+$, so that most of ${\rm S}$ will be converted to ${\rm E}$. One can view the reaction network (\ref{MM_R}) as a thermodynamically consistent modification of the classical Michaelis-Menten kinetics $\ce{S + S <=>[k_1^+][k_1^1] ES ->[k_{\rm cat}] P}$, which doesn't have any variational structure \cite{oster1974chemical}. In a modeling perspective, it would be interesting to compare both models and study the singular limit of (\ref{MM_R}) with $k_2^- \rightarrow 0$ and $k_3^- \rightarrow 0$. We are working on this direction.

Let $c_1 = [\rm E]$, $c_2 = [\rm S]$, $c_3 = [\rm ES]$, $c_4 = [\rm EP]$, $c_5 = [\rm P]$ and ${\bm R} = (R_1, R_2, R_3)^{\rm T}$ denote three reaction trajectories, the energy-dissipation law of the system can be formulated as
\begin{equation}
\frac{\dd}{\dd t} \left( \sum_{i=1}^5 c_i (\ln c_i -  1 + U_i)  \right) = - \int \sum_{l=1}^3 \dot{R_l} \ln \left( \frac{\dot{R_l}}{\eta_l ({\bm c}({\bm R})) }  - 1 \right),
\end{equation}
where $U_i$ can be taken as
\begin{equation*}
U_1 = - \ln (k_1^- k_2^- k_3^-), \quad U_2 = \ln (k_1^+ k_3^-) \quad U_3 = -\ln [k_2^-], \quad U_4 = -\ln [k_2^+], \quad U_5 = -\ln (k_1^+ k_2^+ k_3^+), 
\end{equation*}
and
\begin{equation*}
\eta_1({\bm c}) = k_1^- c_3, \quad \eta_2({\bm c}) = k_2^- c_4, \quad \eta_2({\bm c}) = k_3^- c_1 c_5.
\end{equation*}
We can derive the law of mass action for (\ref{MM_R}) under the above choice of parameters.

In the numerical simulation, we take $k_1^+ = 1$, $k_1^- = 0.5$, $k_2^+ = 100$, $k_2^- = 1$, $k_3^+ = 100$ and $k_3^- = 1$. The initial condition is set as $[\rm S](0) = 1$, $[\rm E](0) = 0.8$, $[\rm ES](0) = [\rm EP](0) = [P][0] = 0.01$. Figure~\ref{MM_Res} displays the numerical result with $\dt = 1/50$. This numerical simulation turns out to be very challenging due to the low concentration levels of ${\rm ES}$ and ${\rm EP}$. Indeed, for the parameters we used, $[\rm ES]$ is about $3.57 \times 10^{-4}$ in the late stage of the simulation.
The numerical results have clearly demonstrated the positivity-preserving property and energy stability of our proposed numerical scheme in the case with multiple reactions.

\begin{figure}[!h]
  \includegraphics[width = \linewidth]{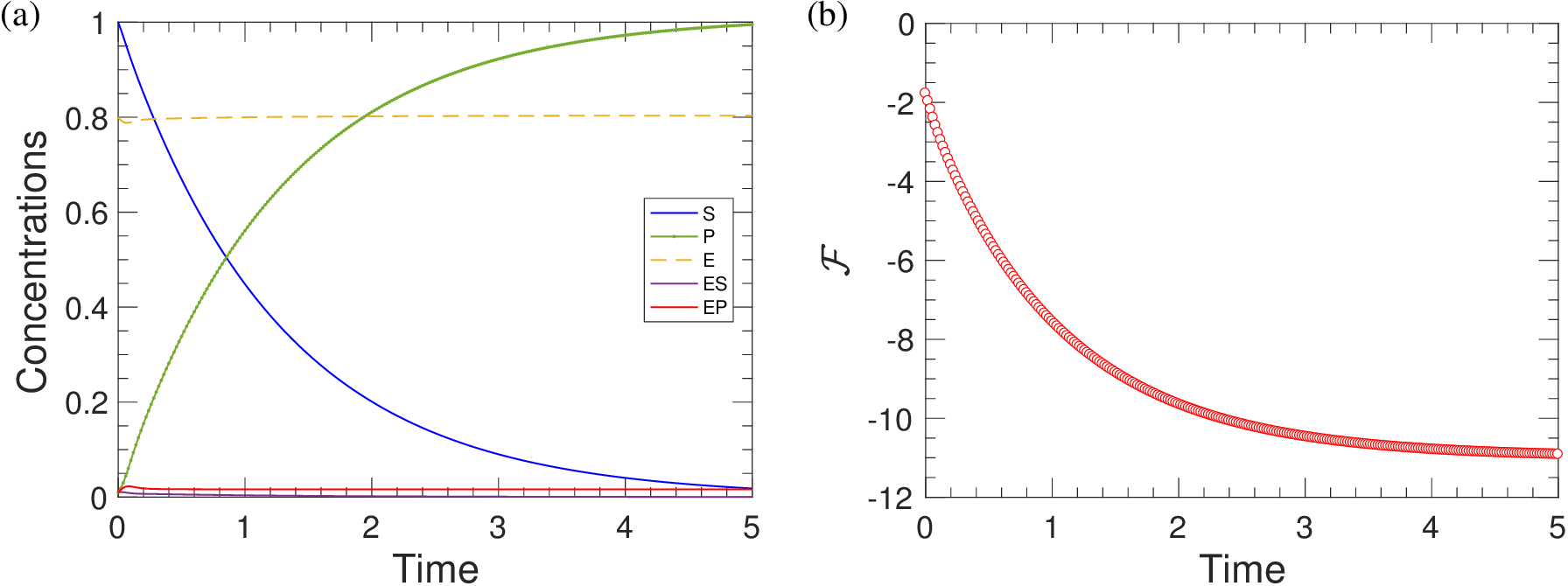}
  \caption{Numerical results for the Generalized Michaelis-Menten kinetics with two intermediate states ($\dt = 1/50$) : (a) The concentrations of different species with respect to time, (b) The numerical free energy with respect to time.}\label{MM_Res}
\end{figure}

\subsection{Reaction-diffusion: accuracy test}

Next we consider a reaction-diffusion system
\begin{equation}\label{example3}
\begin{cases}
  & \pp_t u = D_u \Delta u  - k_1^+ u v^2 + k_1^- v^3 \\
  &  \pp_t v = D_v \Delta v + k_1^+ u v^2 - k_1^- v^3,  \\
\end{cases}
\end{equation}
in which the reaction is given by 
$$ 
  \ce{U + 2 V <=>[k_1^+][k_1^-] 3V} . 
$$ 
Such a type reaction plays an important role in pattern formation in biochemical systems when $k_1^- = 0$ \cite{pearson1993complex, hao2020spatial}. For $k_{1}^{\pm} > 0$, the energy-dissipation law is given by
\begin{equation*}
  \frac{\dd}{\dd t} \int u (\ln u - 1 + U_u) + v (\ln v - 1 + U_v)  \dd \x = -  \int \dot{R} \ln \left( \frac{\dot{R}}{k_1^{-} v^{3}} + 1\right) + \frac{1}{D_u} |\nabla \mu_u|^2 + \frac{1}{D_v} |\nabla \mu_v|^2  \dd \x,
\end{equation*}
where $U_u = \ln k_1^+$ and $U_v = \ln k_1^-$ so that $\dot{R} = k_1^+ u v^2 - k_1^- v^3$.

In the simulation, we take the computational domain $\Omega = (-1, 1)^2$, and impose a periodic boundary condition for both $u$ and $v$ on $\pp \Omega$.
Figure~\ref{RD_1} displays numerical results for $D_u = 0.2$, $D_v = 0.1$, $k_1^+ = 1$ and $k_1^- = 0.1$ at different time instants, with the initial condition given by
\begin{equation*}
u = (- \tanh( (\sqrt{x^2 + y^2} - 0.4)/0.01) + 1)/2 + 1; \quad u = (\tanh( (\sqrt{x^2 + y^2} - 0.4)/0.01) + 1)/2 + 1 . 
\end{equation*}
In the computation, we take $h = 1/50$ and $\Delta t = 1/100$. As shown in Figure~\ref{RD_1}(e), the numerical solution is energy stable for this reaction-diffusion system.

\begin{figure}[!h]
   \includegraphics[width = \linewidth]{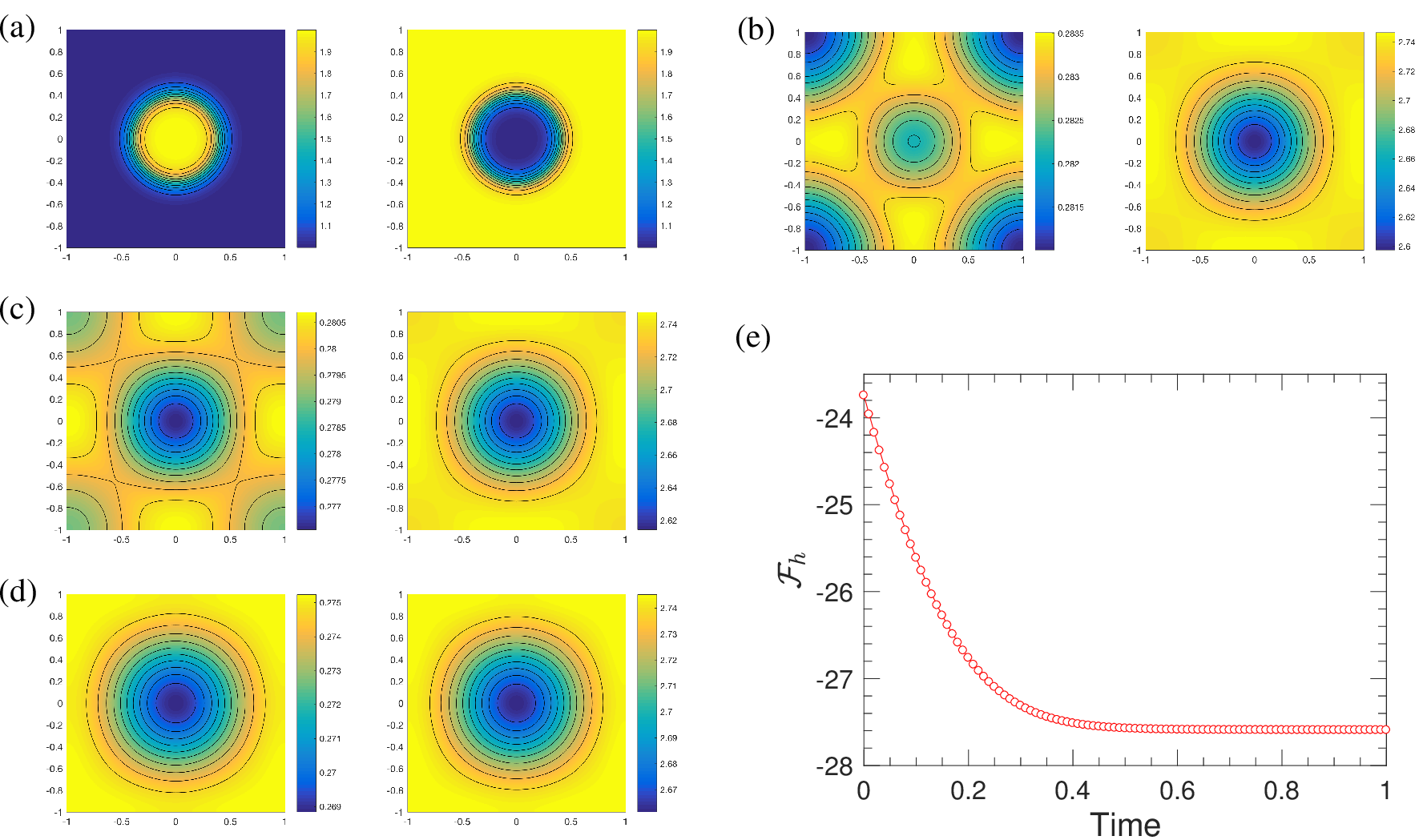}
  \caption{(a)-(d): Numerical results for the reaction-diffusion system (\ref{example3}) with $D_u = 0.2$, $D_v = 0.1$, $k_1^+ = 1$ and $k_1^- = 0.1$ at (a) t = 0, (b) t = 0.7, (c) t = 0.75 and (d) t = 1. (e): The discrete free energy in the time evolution.}\label{RD_1}
\end{figure}

We also perform an accuracy test for this example. The numerical error at $T = 0.2$ is considered, before the system reaches the constant steady state. Since the analytical solution is not available in this example, we use the numerical solution with $h = 1/200$ and $\Delta t = 1/1600$ as the reference solution. 
In the accuracy test for the temporal numerical errors, we fix the spatial resolution as $h=\frac{1}{200}$, so that the spatial numerical error is negligible. Table~\ref{table2} displays the $\ell^{\infty}$ numerical errors at $T = 0.2$, as well as the CPU time for numerical simulations with a sequence of time step sizes: $\dt=\frac{1}{25}$, $\frac{1}{50}$, $\frac{1}{100}$, $\frac{1}{200}$ and $\frac{1}{400}$. 
A very clear first order temporal accuracy for the operator splitting scheme has been observed in the table, as the time step size becomes more refined. 
\begin{table}[!h]
  \begin{center}
    \begin{tabular}{c | c | c |c | c | c | c } 
     \hline \hline
     $\dt$ &  h & $\ell^{\infty}$ error of $u$ & Order in time & $\ell^{\infty}$ error of $v$ & Order in time & CPU (s)  \\
     \hline
     1/25  &  1/200    & 1.117e-1 &    & 9.971e-2 &            &   5.88 \\
     1/50  &  1/200    & 6.045e-2 &  0.8858   & 5.357e-2 &  0.8963    &   11.92 \\
     1/100  &  1/200    &3.083e-2 & 0.9714    & 2.721e-2 &  0.9773          &   23.08 \\
     1/200  & 1/200  & 1.479e-2 & 1.0620     & 1.302e-2 & 1.0634 & 47.08\\
     1/400  & 1/200  & 6.428e-3 & 1.2022 &  5.655e-3 & 1.2031 & 93.86 \\
     \hline  \hline
    \end{tabular}

\end{center}
\caption{Numerical errors, order of accuracy and CPU time for numerical simulations of (\ref{example3}) at $T = 0.2$. The numerical solution with $h = 1/200$ and $\dt = 1/1600$ is taken as the reference solution.}\label{table2}
\end{table}

\begin{table}[ht]
  \begin{center}
  \begin{tabular}{ccccccc}
  \hline  \hline
   --- & $\psi = u$ & Order & $\psi = v$& Order \\
   \hline
  $\| \psi_{h_1} - \psi_{h_2} \|_{\infty}$  & 5.586e-3 & -      & 4.900e-3 & - \\
  $\| \psi_{h_2} - \psi_{h_3} \|_{\infty}$  & 1.979e-3 & 1.970 & 1.727e-3 & 1.983 \\ 
  $\| \psi_{h_3} - \psi_{h_4} \|_{\infty}$  & 9.204e-4 & 1.983 & 8.014e-4 & 1.991 \\
  $\| \psi_{h_3} - \psi_{h_4} \|_{\infty}$  & 5.011e-4 & 1.990 & 4.360e-4 & 1.993 \\
   \hline  \hline
  \end{tabular}
  \caption{The $\ell^\infty$ differences and convergence order for the numerical solutions of $u$ and $v$ at time $T=0.2$. Various mesh resolutions are tested: $h_1=\frac{1}{20}$, $h_2=\frac{1}{30}$, $h_3=\frac{1}{40}$, $h_4=\frac{1}{50}$, $h_5=\frac{1}{60}$, and the time step size is taken as $\Delta t=h^2$.}
  \label{t2:convergence}
  \end{center}
  \end{table}

To verify the spatial accuracy of the proposed numerical scheme for this example, we perform the computations on a sequence of mesh resolutions: $h=\frac{1}{20}, \frac{1}{30}, \frac{1}{40}, \frac{1}{50}$, $\frac{1}{60}$, and the time step size is set as $\Delta t=h^2$ to eliminate the affect of temporal errors.
Since an analytical form of the exact solution is not available, we compute the $\ell^{\infty}$ differences between numerical solutions with consecutive spatial resolutions, $h_{j-1}$, $h_j$ and $h_{j+1}$, in the Cauchy convergence test. 
Since we expect the numerical scheme preserves a second order spatial accuracy, we can compute the following quantity
$$
   \frac{ \ln \Big(  \frac{1}{A^*} \cdot 
   \frac{\| u_{h_{j-1}} - u_{h_j} \|_\infty }{ \| u_{h_j} - u_{h_{j+1}} \|_\infty} \Big) } 
   {\ln  \frac{h_{j-1}}{h_j} } ,  \quad A^* =  \frac{ 1 - \frac{h_j^2}{h_{j-1}^2} }{1 - \frac{h_{j+1}^2}{h_j^2} } ,  
   \quad \mbox{for} \, \, \, h_{j-1} > h_j > h_{j+1} , 
$$
to check the convergence order \cite{liu2020positivity}. As demonstrated in Table~\ref{t2:convergence}, the numerical errors at time $T=0.2$ improve robustly as the spatial mesh refines, and an almost perfect second order spatial convergence rate for the proposed operator splitting scheme is observed. 

\subsection{Reaction-diffusion: nonlinear diffusion}

\begin{figure}[!h]
  \includegraphics[width = \linewidth]{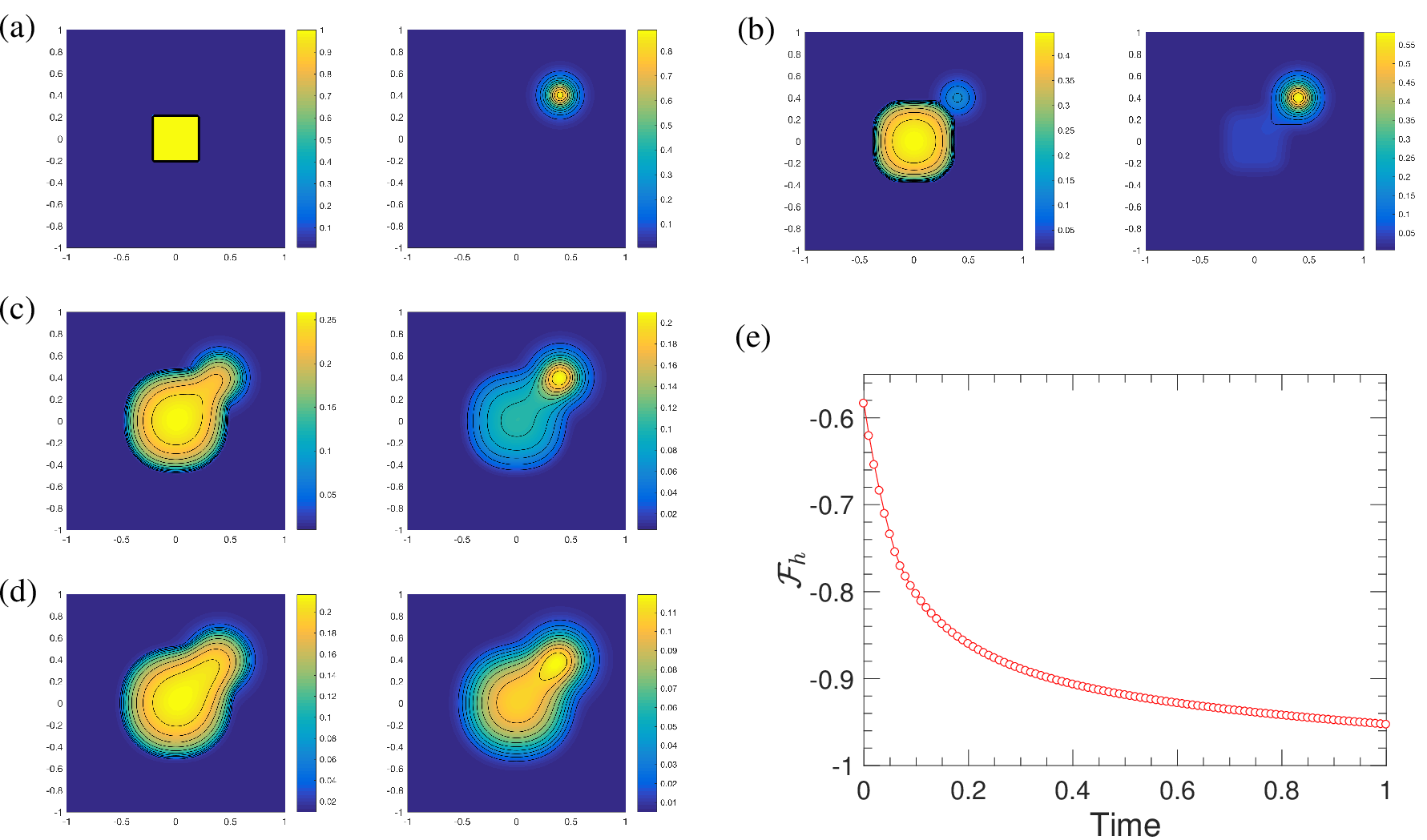}
  \caption{(a)-(d): Numerical results for the reaction-diffusion system (\ref{example_4}) with $m = 4, \quad k_1 = 2, \quad k_2 = 1, \quad \epsilon  = 0.01$ at (a) t = 0, (b) t = 0.7, (c) t = 0.75 and (d) t = 1. (e): The discrete free energy in the time evolution.}\label{RD_PME}
\end{figure}

In the last example, we consider a case that couples a chemical reaction with a nonlinear diffusion given by
\begin{equation}\label{example_4}
  \begin{cases}
  & a_t = \Delta a^{m} - k_1 a + k_2 b \\
  &  b_t  = \epsilon \Delta a + k_1 a - k_2 b. \\
  \end{cases}
\end{equation}
The corresponding chemical reaction is given by $\ce{A <=>[k_1][k_2] B}$.
Equation \ref{example_4} has potential applications in modeling the tumor growth (see \cite{liu2018accurate, perthame2014hele} for more discussions on modeling perspectives). Here we only use equation (\ref{example_4}) as the numerical test; the values that we take may not have the physical meaning. The corresponding energy-dissipation law for (\ref{example_4}) can be written as
\begin{equation}
\frac{\dd}{\dd t} \int a (\ln a - 1) + b (\ln b - 1) + a U_a + b U_b \dd \x = - \int \dot{R} \ln \left( \frac{\dot{R}}{k_2 b} + 1 \right) + \frac{1}{m} a^{2-m} |\uvec_a|^2 +   \frac{1}{\epsilon} b |\uvec_b|^2 \dd \x.
\end{equation}
We take $U_a = \ln k_1$ and $U_b = \ln k_2$ to obtain the original equation. In the numerical simulation, we set the domain as $\Omega = (-1, 1)^2$ and impose a periodic boundary conditions for both $a$ and $b$. The parameters in the equations are chosen as
\begin{equation}
  m = 4, \quad k_1 = 2, \quad k_2 = 1, \quad \epsilon  = 0.01.
\end{equation}
Figure \ref{RD_PME} shows the numerical results for $h = 1/50$ and $t = 1/100$, in which the initial condition is given by 
\begin{equation*}
a(x, y) = 
\begin{cases}
  & 1 , \quad   |x| \leq 0.2 ~\text{and}~ |y| \leq 0.2, \\
  & 0.01, \quad \text{otherwise}, \\
\end{cases}
\quad
b(x, y) = - \tanh( (\sqrt{(x - 0.4)^2 + (y - 0.4)^2} - 0.1)/0.1) + 0.005.
\end{equation*}
It is noticed that the chemical reaction is at its chemical equilibrium for $a = 0.01$ and $c = 0.005$. This numerical example has also demonstrated the energy stability and positivity-preserving property of the proposed operator splitting scheme. In addition, the finite speed propagation property for the PME can be observed in the numerical solutions.

\section{Summary}

We propose a positivity-preserving and energy stable operator splitting numerical scheme 
for reaction-diffusion systems involving the law of mass action with detailed balance condition. The numerical discretization is based on a recently-developed energetic variational formulation, in which the reaction part is reformulated as a generalized gradient flow of the reaction trajectory and both the reaction and diffusion parts dissipates the same free energy.
This discovery enables us to develop an operator splitting scheme, with both the positivity-preserving property and energy stability established at a theoretical level. In more details, the convex splitting technique is applied to the reformulated reaction part; all the logarithmic terms are treated implicitly because of their convex nature. The positivity-preserving property and unique solvability are theoretically proved, based on the subtle fact that, the singularity of the logarithmic function around the limiting value  prevents the numerical solution reaching these singular values. The energy stability follows a careful convexity analysis. Using similar ideas, we establish the positivity-preserving property and energy stability for the standard semi-implicit solver in the diffusion stage as well. Therefore, a combination of these two stages leads to a numerical scheme satisfying both theoretical properties. Several numerical results have also been presented to demonstrate the robustness of the proposed operator splitting scheme. To our best knowledge, it is the first time to report a free energy dissipation for a combined operator splitting scheme to a nonlinear PDE with variational structures. This idea can be applied to a general dissipative systems that contain different components of dissipations, such problems with dynamical boundary conditions \cite{knopf2020phase, liu2019energetic}, and chemomechanical systems in biology and soft matter physics \cite{julicher1997modeling, wang2021two}.

\section*{Acknowledgement} 
This work is partially supported by the National Science Foundation (USA) grants NSF DMS-1759536, NSF DMS-1950868 (C. Liu, Y. Wang) and NSF DMS-2012669 (C. Wang). 
Y. Wang would also like to thank Department of Applied Mathematics at Illinois Institute of Technology for their generous support and for a stimulating environment.


\section*{References}
\bibliographystyle{cas-model2-names}
\bibliography{KCR}

\end{document}